\newtheorem{thm}{Theorem}[section]
\newtheorem{lem}[thm]{Lemma}
\newtheorem{prop}[thm]{Proposition}
\theoremstyle{definition}
\newtheorem{defn}[thm]{Definition}
\theoremstyle{remark}
\newtheorem{rem}[thm]{Remark}
\numberwithin{equation}{section}
\newcommand*{\cat}[1]{\mathbf{#1}} 
\newcommand*{\mto}{\rightarrow} 
\newcommand*{\set}[1]{\left\{#1\right\}} 
\newcommand*{\comp}{\circ} 
\newcommand*{\Int}{\mathbb{Z}} 
\newcommand*{\Rat}{\mathbb{Q}} 
\newcommand*{\FF}{\mathbb{F}} 
\newcommand*{\cmplx}[1]{{#1}^\bullet} 
\newcommand*{\tensor}{\otimes} 
\newcommand*{\isomorph}{\cong} 
\newcommand*{\op}{\mathrm{op}} 
\newcommand*{\cont}{\mathrm{cont}} 
\newcommand*{\sep}{\mathrm{sep}} 
\newcommand*{\algc}[1]{\overline{#1}} 
\newcommand*{\sheaf}[1]{\mathcal{#1}} 
\newcommand*{\talg}[2][T]{{#2}\langle #1\rangle}
\newcommand{\openideals}{\mathfrak{I}} 
\newcommand{\Sect}{\Gamma} 
\newcommand{\Sectc}{\Gamma_{\mathrm{c}}} 
\newcommand{\ab}{\mathrm{ab}} 
\newcommand{\loc}{\mathrm{loc}} 
\newcommand{\rel}{\mathrm{rel}} 
\newcommand{\ncL}{\mathcal{L}} 
\newcommand{\ValR}{\mathcal{O}} 
\newcommand{\ringtransf}{\Psi} 
\newcommand{\id}{\mathrm{id}} 
\newcommand{\Frob}{\mathfrak{F}} 
\DeclareMathOperator{\HF}{H} 
\DeclareMathOperator{\Jac}{Jac} 
\DeclareMathOperator{\im}{im} 
\DeclareMathOperator{\Spec}{Spec} 
\DeclareMathOperator{\RDer}{R} 
\DeclareMathOperator{\Gal}{Gal} 
\DeclareMathOperator{\cd}{cd} 
\DeclareMathOperator{\KTh}{K} 
\DeclareMathOperator{\SK}{SK} 
\DeclareMathOperator{\Det}{Det} 
\DeclareMathOperator{\Res}{Res} 
\begin{document}


\title[Unit $L$-functions]{Unit $L$-Functions for \'etale sheaves of modules over noncommutative rings}%
\author{Malte Witte}%

\address{Malte Witte\newline Ruprecht-Karls-Universit\"at Heidelberg\newline
Mathematisches Institut\newline
Im Neuenheimer Feld 288\newline
D-69120 Heidelberg }%
\email{witte@mathi.uni-heidelberg.de}

\subjclass{}

\date{\today}%

\begin{abstract}
Let $s\colon X\mto \Spec \FF$ be a separated scheme of finite type over a finite field $\FF$ of characteristic $p$, let $\Lambda$ be a not necessarily commutative $\Int_p$-algebra with finitely many elements, and let $\cmplx{\sheaf{F}}$ be a perfect complex of $\Lambda$-sheaves on the \'etale site of $X$. We show that the ratio $L(\cmplx{\sheaf{F}},T)/L(\RDer s_!\cmplx{\sheaf{F}},T)$, which is a priori an element of $\KTh_1(\Lambda[[T]])$, has a canonical preimage in $\KTh_1(\Lambda[T])$. We use this to prove a version of the noncommmutative Iwasawa main conjecture for $p$-adic Lie coverings of $X$.
\end{abstract}

\maketitle

Let $p$ be a prime number and $\FF$ the finite field with $q=p^v$ elements, and $s\colon X\mto \Spec \FF$ a separated finite type $\FF$-scheme. Let further $\Lambda$ be an adic $\Int_p$-algebra, i.\,e.\ $\Lambda$ is compact for the topology defined by the powers of its Jacobson radical $\Jac(\Lambda)$. For any perfect complex of $\Lambda$-sheafs $\cmplx{\sheaf{F}}$ on the \'etale site of $X$ we have defined in \cite{Witte:NoncommutativeLFunctions} an $L$-function $L(\cmplx{\sheaf{F}},T)$ attached to $\cmplx{\sheaf{F}}$. This is an element in the first $K$-group $\KTh_1(\Lambda[[T]])$ of the power series ring $\Lambda[[T]]$ in the formal variable $T$ commuting with the elements of $\Lambda$. The total higher direct image $\RDer f_!\cmplx{\sheaf{F}}$ is again a perfect complex of $\Lambda$-sheaves and so we may form the $L$-function $L(\RDer f_!\cmplx{\sheaf{F}},T)$, which is also an element of $\KTh_1(\Lambda[[T]])$. Different from the situation where $\Lambda$ is an adic $\Int_\ell$-algebra with $\ell\neq p$ dicussed in \cite{Witte:NoncommutativeLFunctions}, the ratio $L(\cmplx{\sheaf{F}},T)/L(\RDer f_!\cmplx{\sheaf{F}},T)$ does not need to be $1$ in $\KTh_1(\Lambda[[T]])$.

Let
$$
\talg{\Lambda}=\varprojlim_k \Lambda/\Jac(\Lambda)^k[T]
$$
denote the $\Jac(\Lambda)$-adic completion of the polynomial ring $\Lambda[T]$ and let
$$
\widehat{\KTh}_1(\talg{\Lambda})=\varprojlim_k \KTh_1(\Lambda/\Jac(\Lambda)^k[T])
$$
denote its first completed $K$-group. If $\Lambda$ is commutative, then
$$
\widehat{\KTh}_1(\talg{\Lambda})=\KTh_1(\talg{\Lambda})=\talg{\Lambda}^\times
$$
is a subgroup of $\KTh_1(\Lambda[[T]])=\Lambda[[T]]^\times$ and Emerton and Kisin \cite{EmertonKisin} show that $L(\cmplx{\sheaf{F}},T)/L(\RDer f_!\cmplx{\sheaf{F}},T)\in\talg{\Lambda}^\times$. If $\Lambda$ is not commutative, the canonical homomorphism
$$
\widehat{\KTh}_1(\talg{\Lambda})\mto \KTh_1(\Lambda[[T]])
$$
is no longer injective in general. Nevertheless, we shall prove:

\begin{thm}[see Thm.~\ref{thm:construction of Q}]
There exists a unique way to associate to each separated $\FF$-scheme $s\colon X\mto \Spec \FF$ of finite type, each adic $\Int_p$-algebra $\Lambda$, and each perfect complex of $\Lambda$-sheaves $\cmplx{\sheaf{F}}$ on $X$ an element $Q(\cmplx{\sheaf{F}},T)\in \widehat{\KTh}_1(\talg{\Lambda})$ such that
\begin{enumerate}
\item the image of $Q(\cmplx{\sheaf{F}},T)$ in $\KTh_1(\Lambda[[T]])$ is the ratio $L(\cmplx{\sheaf{F}},T)/L(\RDer s_!\cmplx{\sheaf{F}},T)$,
\item $Q(\cmplx{\sheaf{F}},T)$ is multiplicative on exact sequences of perfect complexes and depends only on the quasi-isomorphism class of $\cmplx{\sheaf{F}}$,
\item $Q(\cmplx{\sheaf{F}},T)$ is compatible with changes of the ring $\Lambda$.
\end{enumerate}
\end{thm}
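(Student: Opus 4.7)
The plan is to reduce the statement to the case of finite coefficients, construct $Q$ explicitly from a Godement-type resolution, and then deduce uniqueness from the axioms. By the defining inverse limit
\[
\widehat{\KTh}_1(\talg{\Lambda})=\varprojlim_k \KTh_1\bigl((\Lambda/\Jac(\Lambda)^k)[T]\bigr),
\]
axiom~(3) forces $Q(\cmplx{\sheaf{F}},T)$ to be determined by its images $Q_k\in\KTh_1((\Lambda/\Jac(\Lambda)^k)[T])$ under base change to each finite quotient. Conversely, if such $Q_k$ can be constructed compatibly with the transitions $\Lambda/\Jac(\Lambda)^{k+1}\to\Lambda/\Jac(\Lambda)^k$, the inverse limit provides the desired element. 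The entire theorem therefore reduces to the case in which $\Lambda$ has finitely many elements, where $\Jac(\Lambda)^N=0$ for some $N$ and $\talg{\Lambda}=\Lambda[T]$.

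For such finite $\Lambda$, I would follow the construction of \cite{Witte:NoncommutativeLFunctions}: both $L(\cmplx{\sheaf{F}},T)$ and $L(\RDer s_!\cmplx{\sheaf{F}},T)$ arise as the $\KTh_1$-classes of $1-FT$ acting on perfect $\Lambda[[T]]$-complexes obtained from a Godement-type resolution of $\cmplx{\sheaf{F}}$ and from its compactly supported variant, respectively. The ratio is then naturally represented by $1-FT$ on the cone of the comparison map $\Sectc\to\Sect$ of the Godement complex. The crucial step is to show that this cone admits a canonical presentation by a perfect complex of $\Lambda[T]$-modules -- equivalently, that after a suitable truncation or choice of subcomplex, $1-FT$ can be expressed by a square invertible matrix over $\Lambda[T]$ up to the usual $\KTh_1$-relations. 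Taking the $\KTh_1$-class of that perfect model defines $Q(\cmplx{\sheaf{F}},T)$. A d\'evissage along an open-closed decomposition $j\colon U\hookrightarrow X$, $i\colon Z\hookrightarrow X$, combined with axiom~(2), reduces the verification of perfectness to constant sheaves on zero-dimensional $X$, where the assertion becomes a direct computation with characteristic polynomials of Frobenius.

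Axioms (2) and (3) then follow from the functoriality of the Godement resolution with respect to exact triangles and ring change; axiom (1) is built into the construction. For uniqueness, any two candidates $Q,Q'$ differ by an element of $\ker\bigl(\widehat{\KTh}_1(\talg{\Lambda})\to \KTh_1(\Lambda[[T]])\bigr)$ that is natural in $\Lambda$ and multiplicative in $\cmplx{\sheaf{F}}$; reducing to finite $\Lambda$ and, by d\'evissage, to constant sheaves on zero-dimensional strata forces this invariant to vanish. The principal obstacle is the perfectness assertion of the construction step: producing a canonical perfect $\Lambda[T]$-model of $\Cone(\Sectc\to\Sect)$ together with its Frobenius, and proving the resulting $\KTh_1$-class is independent of the chosen resolution and of any auxiliary Nagata compactification. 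This is the $K$-theoretic, noncommutative refinement of the Emerton--Kisin unit-$L$-function theorem, whose rigid-analytic proof over a commutative base does not directly yield a perfect complex; some additional input, presumably a nuclear/Fredholm-type control of $1-FT$ on the infinite Godement complex, will be required to supply such a model in the $K_1$-setting.
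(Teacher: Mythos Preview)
Your reduction to finite $\Lambda$ is correct and matches the paper. Beyond that, both the existence and uniqueness arguments have genuine gaps, and your overall strategy diverges sharply from the paper's.

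\textbf{Existence.} You propose to build $Q$ directly by exhibiting a perfect $\Lambda[T]$-model of $\Cone(\Sectc\to\Sect)$ with its Frobenius, and you yourself flag this as ``the principal obstacle.'' It is: no such direct construction is known for noncommutative $\Lambda$, and the rigid-analytic/Fredholm machinery behind Emerton--Kisin genuinely requires commutativity. The paper does \emph{not} attempt this. Instead it treats Emerton--Kisin as a black box for commutative $\Lambda$ and reduces the noncommutative case to it by representation theory. Concretely: by d\'evissage and induction on $\dim X$ one reduces to $\cmplx{\sheaf{F}}=\ringtransf_{\cmplx{P}}(g_!g^*\Int_p)$ for a finite Galois cover $g\colon V\to U$ with group $G$ and some $\cmplx{P}\in\Int_p[G]^\op\text{-}\cat{SP}(\Lambda)$. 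One then shows, using the Chinburg--Pappas--Taylor analysis of $\KTh_1(\talg{\Int_p[G]})$ (integral group logarithm, reduction to $p$-$\Rat_p$-elementary subgroups, Brauer-type induction on the Green module $G\mapsto\operatorname{G}_0(\Int_p[G])$), that the ratio $L/L(\RDer s_!)$ for $g_!g^*\Int_p$ lifts to $\KTh_1(\talg{\Int_p[G]})$. The lift for $\cmplx{\sheaf{F}}$ is then obtained by pushing along $\ringtransf_{\cmplx{\talg{P}}}$, after arranging (via a vanishing result for $\SK_1$ under towers, using $\cd_p\leq 1$ of the function field) that this map factors through $\Det(\talg{\Int_p[G]}^\times)$. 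Your proposal contains none of this machinery, and the ``nuclear/Fredholm-type control'' you invoke is not available over noncommutative bases.

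\textbf{Uniqueness.} Your argument reduces to ``constant sheaves on zero-dimensional strata,'' but d\'evissage does not do this: it reduces to locally constant sheaves on strata of arbitrary dimension. On a zero-dimensional stratum the ratio is $1$ and there is nothing to prove; the content lies entirely in positive dimension, where $\ker(\widehat{\KTh}_1(\talg{\Lambda})\to\KTh_1(\Lambda[[T]]))$ can be nonzero and property~(1) alone does not pin $Q$ down. The paper's uniqueness argument instead exploits the same factorisation through $\Det(\talg{\Int_p[G]}^\times)$: since $\Det(\talg{\Int_p[G]}^\times)\hookrightarrow\Det(\Int_p[G][[T]]^\times)$ is injective, any competitor $Q'$ must agree with $Q$ after passing through $\ringtransf_{\cmplx{\talg{P}}}$, and combined with the inductive hypothesis on the closed complement this forces $Q'=Q$. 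Properties~(2) and~(3) are used essentially here, not merely as bookkeeping.
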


Aside from the result of Emerton and Kisin, a central ingredient for the proof is the recent work of Chinburg, Pappas, and Taylor \cite{CPT:K1ofPadicGroupRingI}, \cite{CPT:K1ofPadicGroupRingII} on the first $\KTh$-group of $p$-adic group rings. In fact, the main strategy of the proof is to reduce the assertion first to the case $\Lambda=\Int_p[G]$ for a finite group $G$ and then use the results of Chinburg, Pappas, and Taylor to reduce it further to the case already treated by Emerton and Kisin. In particular, we use almost exclusively methods from representation theory, whereas the result of Emerton and Kisin itself may be considered as the geometric input.

As an application, we deduce the following version of a noncommutative Iwasawa main conjecture  for varieties over finite fields. Assume for the moment that $X$ is geometrically connected and let $G$ be a factor group of the fundamental group of $X$ such that $G\isomorph H\rtimes \varGamma$ where $H$ is a compact $p$-adic Lie group and
$$
\varGamma=\Gal(\FF_{q^{p^{\infty}}}/\FF)\isomorph\Int_{p}.
$$
We write
$$
\Int_{p}[[G]]=\varprojlim \Int_{p}[G/U]
$$
for the Iwasawa algebra of $G$. Let
$$
S=\{f\in \Int_{p}[[G]]\colon \text{$\Int_{p}[[G]]/\Int_{p}[[G]]f$ is finitely generated as $\Int_{p}[[H]]$-module}\}
$$
denote Venjakob's canonical Ore set and write $\Int_{p}[[G]]_S$ for the localisation of $\Int_{p}[[G]]$ at $S$. We turn $\Int_{p}[[G]]$ into a smooth $\Int_{p}[[G]]$-sheaf $\sheaf{M}(G)$ on $X$ by letting the fundamental group of $X$ act contragrediently on $\Int_{p}[[G]]$. Let $\RDer\Sectc(X,\sheaf{F})$  denote the \'etale cohomology with proper support of a flat constructible $\Int_{p}$-sheaf $\sheaf{F}$ on $X$.

For every continuous $\Int_{p}$-representation $\rho$ of $G$, there exists a homomorphism
$$
\rho\colon \KTh_1(\Int_{p}[[G]]_S)\mto Q(\Int_{p}[[\varGamma]])^{\times}
$$
into the units $Q(\Int_{p}[[\varGamma]])^{\times}$ of the field of fractions of $\Int_{p}[[\varGamma]]$. It is induced by sending $g\in G$ to $\det ([g]\rho(g)^{-1})$, with $[g]$ denoting the image of $g$ in $\varGamma$. On the other hand,  $\rho$ gives rise to a flat and smooth $\Int_{p}$-sheaf $\sheaf{M}(\rho)$ on $X$.

\begin{thm}\label{thm:special case NCIMC}\
\begin{enumerate}
\item $\RDer\Sectc(X,\sheaf{M}(G)\tensor_{\Int_p}\sheaf{F})$ is a perfect complex of $\Int_{p}[[G]]$-modules whose cohomology groups are $S$-torsion. In particular, it gives rise to a class
$$
[\RDer\Sectc(X,\sheaf{M}(G)\tensor_{\Int_p}\sheaf{F})]^{-1}
$$
in the relative $\KTh$-group $\KTh_0(\Int_{p}[[G]],\Int_{p}[[G]]_S)$.
\item There exists an element $\widetilde{\ncL}_G(X/\FF,\sheaf{F})\in\KTh_1(\Int_{p}[[G]]_S)$ with the following properties:
\begin{enumerate}
\item \emph{(Characteristic element)} The image of $\widetilde{\ncL}_G(X/\FF,\sheaf{M}(\rho)\tensor_{\Int_{p}}\sheaf{F})$ under the boundary homomorphism $d\colon\KTh_1(\Int_{p}[[G]]_S)\mto \KTh_0(\Int_{p}[[G]],\Int_{p}[[G]]_S)$ is
$$
[\RDer\Sectc(X,\sheaf{M}(G)\tensor_{\Int_p}\sheaf{F})]^{-1}.
$$
\item \emph{(Interpolation with respect to all continuous representations)} Assume that $\rho$ is a continuous $\Int_{p}$-representation of $G$. We let $\gamma$ denote the image of the geometric Frobenius in $\varGamma$. Then $$\rho(\widetilde{\ncL}_G(X/\FF,\sheaf{F}))=L(\sheaf{M}(\rho)\tensor_{\Int_p}\sheaf{F},\gamma^{-1})$$
    in $Q(\Int_{p}[[\varGamma]])^{\times}$.
\end{enumerate}
\end{enumerate}
\end{thm}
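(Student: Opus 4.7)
The plan is to deduce both parts of Theorem~\ref{thm:special case NCIMC} from the main theorem constructing $Q$, together with standard finiteness results for \'etale cohomology with Iwasawa algebra coefficients. For Part~(1), first I would observe that $\sheaf{M}(G)$ is a smooth rank-one $\Int_p[[G]]$-sheaf, formed as the inverse limit over open normal subgroups $U$ of the finite $\Int_p[G/U]$-sheaves attached to the covers cut out by $G$; this sheaf is flat, and tensoring with the flat constructible $\Int_p$-sheaf $\sheaf{F}$ yields a perfect complex of $\Int_p[[G]]$-sheaves. The preservation of perfectness under $\RDer\Sectc$ for perfect adic sheaves, established in \cite{Witte:NoncommutativeLFunctions}, then gives that $\RDer\Sectc(X,\sheaf{M}(G)\tensor_{\Int_p}\sheaf{F})$ is a perfect complex of $\Int_p[[G]]$-modules. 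The $S$-torsion claim is, by the very definition of $S$, equivalent to finite generation as an $\Int_p[[H]]$-module; after base change along $\Int_p[[G]]\qto\Int_p[[\varGamma]]$ or equivalently restriction to the $\varGamma$-cover $Y\to X$, this reduces to finite generation of $\RDer\Sectc(Y,\sheaf{M}(H)\tensor\sheaf{F})$ over $\Int_p[[H]]$, a consequence of the Lazard--Venjakob finiteness theory for cohomology of $p$-adic Lie covers.

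For Part~(2), apply the main theorem with $\Lambda=\Int_p[[G]]$ and $\cmplx{\sheaf{F}}=\sheaf{M}(G)\tensor_{\Int_p}\sheaf{F}$ to obtain $Q=Q(\cmplx{\sheaf{F}},T)\in\widehat{\KTh}_1(\talg{\Lambda})$. I would define $\widetilde{\ncL}_G(X/\FF,\sheaf{F})\in\KTh_1(\Lambda_S)$ as the product of two classes: (i) the image of $Q$ under the naive evaluation $\talg{\Lambda}\to\Lambda\to\Lambda_S$, $T\mapsto\gamma^{-1}$, which converges because elements of $\talg{\Lambda}$ have $\Jac(\Lambda)$-adically null coefficients while multiplication by $\gamma^{-1}$ is a topological automorphism of $\Lambda$; and (ii) a characteristic element of $\RDer\Sectc(X,\cmplx{\sheaf{F}})$ in $\KTh_1(\Lambda_S)$, arising from the specialization of the Frobenius-determinant factor $L(\RDer s_!\cmplx{\sheaf{F}},T)^{-1}$ at $T=\gamma^{-1}$ (well-defined in $\KTh_1(\Lambda_S)$ because the complex is $S$-torsion). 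The identification of the image of $Q$ in $\KTh_1(\Lambda[[T]])$ with $L(\cmplx{\sheaf{F}},T)/L(\RDer s_!\cmplx{\sheaf{F}},T)$ from the main theorem ensures these two pieces combine consistently.

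Property~(a) then follows from the commutative diagram
\[
\xymatrix{
\widehat{\KTh}_1(\talg{\Lambda})\ar[r]\ar[d]&\KTh_1(\Lambda[[T]])\ar[d]\\
\KTh_1(\Lambda_S)\ar[r]^{d}&\KTh_0(\Lambda,\Lambda_S)
}
\]
because the naive $\gamma^{-1}$-specialization of $Q$ factors through $\KTh_1(\Lambda)$ and therefore has trivial boundary, while the Frobenius-determinant correction contributes exactly $[\RDer\Sectc]^{-1}$. Property~(b) will follow from the compatibility of $Q$ with changes of ring (property~(3) of the main theorem), applied to the homomorphism $\Int_p[[G]]\to\Mat_n(\Int_p[[\varGamma]])$ induced by $g\mapsto[g]\rho(g)^{-1}$: this transports $Q$ to the corresponding element for the commutative coefficient ring $\Int_p[[\varGamma]]$, where the Emerton--Kisin theorem identifies it with the classical rational L-function, and evaluation at $T=\gamma^{-1}$ delivers $L(\sheaf{M}(\rho)\tensor\sheaf{F},\gamma^{-1})$ in $Q(\Int_p[[\varGamma]])^\times$.

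The main obstacle, as I see it, is the rigorous definition of the specialization landing in $\KTh_1(\Lambda_S)$: any map factoring through $\KTh_1(\Lambda)$ has trivial boundary, so one must exploit the rationality of the L-function (and not merely its power-series nature) to extract a legitimate characteristic-element contribution from the Frobenius-determinant factor. Ensuring that this definition is simultaneously compatible with the boundary map and with the ring-change maps required in step~(b), and that the resulting $\widetilde{\ncL}_G(X/\FF,\sheaf{F})$ is independent of auxiliary choices, is where I expect the bulk of the technical work to lie.
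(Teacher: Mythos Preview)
Your overall architecture---correct a characteristic element by a $Q$-factor---matches the paper, but the implementation has a genuine gap that the paper handles differently.

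The problem is your ``naive evaluation'' $\talg{\Lambda}\to\Lambda$, $T\mapsto\gamma^{-1}$, with $\Lambda=\Int_p[[G]]$. In $\talg{\Lambda}$ the variable $T$ is \emph{central}, but $\gamma^{-1}\in G=H\rtimes\varGamma$ is not central in $\Int_p[[G]]$ unless $\varGamma$ acts trivially on $H$. Hence $T\mapsto\gamma^{-1}$ is not a ring homomorphism: for $a,b\in\Lambda$ one has $(aT)(bT)=abT^2\mapsto ab\gamma^{-2}$, while $(a\gamma^{-1})(b\gamma^{-1})\neq ab\gamma^{-2}$ in general. So neither your piece~(i) nor your piece~(ii) is well-defined as stated; the convergence argument you give is irrelevant to this obstruction. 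Your final paragraph flags difficulties with the specialisation, but locates them in rationality rather than in noncommutativity, which is where the real issue lies.

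The paper sidesteps this entirely. It takes as piece~(ii) the element $\ncL_G(X/\FF,\sheaf{F})\in\KTh_1(\Int_p[[G]]_S)$ already constructed in \cite{Witte:MCVarFF} (not via any specialisation of $L(\RDer s_!\,\cdot\,,T)$, but by an explicit Frobenius construction), which already satisfies $d\ncL_G=[\RDer\Sectc]^{-1}$ and the interpolation property for $L(\RDer s_!\,\cdot\,,\gamma^{-1})$. For piece~(i) it evaluates $Q(f_!f^*\sheaf{F},T)$ at $T=1$, which \emph{is} a ring homomorphism and lands in $\KTh_1(\Int_p[[G]])$; this immediately gives property~(a). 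The passage from $T=1$ to $T=\gamma^{-1}$, needed for property~(b), happens only \emph{after} applying $\rho$ and landing in the commutative ring $\Int_p[[\varGamma]]$: the key step is Proposition~\ref{prop:cyclotomic case}, a nontrivial identity $Q(f_!f^*A,T)=Q(A,\gamma^{-1}T)$ in $\widehat{\KTh}_1(\talg{\Int_p[[\varGamma]]})$ for the cyclotomic covering, proved via the uniqueness clause of Theorem~\ref{thm:construction of Q}. This proposition is precisely the missing bridge in your outline.
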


This enhances the main result of \cite{Burns:MCinGIwTh+RelConj}, which also asserts the existence of $\widetilde{\ncL}_G(X/\FF,\sheaf{F})$, but requires only that it satisfies the interpolation property with respect to finite representations. In fact, we will prove in Section~\ref{sec:MC} an even more general version of this theorem in the style of \cite{Witte:MCVarFF}, replacing $\Int_{p}$ by arbitrary adic $\Int_p$-algebras and allowing schemes and coverings which are not necessarily connected. 

We refer to \cite{Burns:MCinGIwTh+RelConj} and \cite{Witte:NCIMCFuncField} for applications of Theorem~\ref{thm:special case NCIMC}.

\section{Preliminaries on completed \texorpdfstring{$\KTh$}{K}-Theory}

For any topological ring $R$ (associative and with unity) we let $\openideals_R$ denote the lattice of all two-sided open ideals of $R$. For any $n\geq 0$ we call
$$
\widehat{\KTh}_n(R)=\varprojlim_{I\in\openideals_R}\KTh_n(R/I)
$$
the $n$-th completed $\KTh$-group of $R$. The group $\widehat{\KTh}_n(R)$ becomes a topological group by equipping each $\KTh_n(R/I)$ with the discrete topology.

Recall that we call $R$ an adic ring if $R$ is compact and the Jacobson radical $\Jac(R)$ is open and finitely generated, or equivalently,
$$
R=\varprojlim_k R/\Jac(R)^k
$$
with $R/\Jac(R)^k$ a finite ring. Fukaya and Kato showed that for adic rings the canonical homomorphism $\KTh_1(R)\mto\widehat{\KTh}_1(R)$ is an isomorphism \cite[Prop.~1.5.1]{FK:CNCIT}. The same is true if $R=A[G]$ with $G$ a finite group and $A$ a commutative, $p$-adically complete, Noetherian integral domain with fraction field of characteristic $0$ \cite[Thm.~1.2]{CPT:K1ofPadicGroupRingII}. We can add the following rings to the list.

\begin{prop}
Let $R$ be a commutative topological ring such that
\begin{enumerate}
  \item the topology of $R$ is an ideal topology, i.\,e.\ $\openideals_R$ is a basis of open neighbourhoods of $0$,
  \item $$R=\varprojlim_{I\in\openideals_R}R/I,$$
  \item the Jacobson radical $\Jac(R)$ is open.
\end{enumerate}
Then $\KTh_1(R)\mto\widehat{\KTh}_1(R)$ is an isomorphism.
\end{prop}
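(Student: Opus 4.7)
My plan is to exploit commutativity via the determinant splitting, reducing to separate statements about units and about $\SK_1$. Since $\Jac(R) =: J$ is open, for any open ideal $I$ the intersection $I \cap J$ is again an open ideal, so by cofinality we may restrict $\openideals_R$ to open ideals contained in $J$ and assume $I \subseteq J$ throughout. For any commutative ring $R'$ the determinant yields a natural split short exact sequence
\begin{equation*}
1 \mto \SK_1(R') \mto \KTh_1(R') \xrightarrow{\det} (R')^\times \mto 1,
\end{equation*}
split by $(R')^\times = \mathrm{GL}_1(R') \hookrightarrow \mathrm{GL}(R') \qto \KTh_1(R')$. Applying this naturally to $R$ and each $R/I$ and using that inverse limits preserve split short exact sequences, the statement reduces to:
\begin{enumerate}
\item[(a)] $R^\times \mto \varprojlim_I (R/I)^\times$ is an isomorphism;
\item[(b)] $\SK_1(R) \mto \varprojlim_I \SK_1(R/I)$ is an isomorphism.
\end{enumerate}

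Part (a) is immediate: injectivity follows from condition~(2), and for surjectivity, condition~(2) produces from a compatible system of units an $x \in R$ whose image in $R/J$ is a unit. Since $J$ is the intersection of all maximal ideals of the commutative ring $R$, such $x$ lies outside every maximal ideal of $R$ and hence is a unit in $R$.

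Part (b) is the main work. For $I \subseteq J$, the reduction maps $\mathrm{GL}_n(R) \qto \mathrm{GL}_n(R/I)$, $\mathrm{SL}_n(R) \qto \mathrm{SL}_n(R/I)$, and $\mathrm{E}_n(R) \qto \mathrm{E}_n(R/I)$ are all surjective: an entry-wise lift of a matrix in $\mathrm{GL}_n(R/I)$ has determinant a unit modulo $J$ and hence a unit in $R$ by (a); elementary generators lift directly. Thus each $\SK_1(R) \mto \SK_1(R/I)$ is surjective. The inverse-limit assertion then follows by a Mittag-Leffler argument paralleling the Fukaya--Kato proof in the adic case: completeness (condition~(2)) assembles a compatible system of $\mathrm{SL}$-representatives into a single matrix in $\mathrm{SL}(R)$, and any matrix whose reduction lies in $\mathrm{E}(R/I)$ for every $I$ is written as a convergent infinite product of elementary matrices in $\mathrm{E}(R)$. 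The main obstacle will be this last step, where the unbounded growth of matrix sizes and of the number of elementary factors must be controlled when passing to the stable limit.
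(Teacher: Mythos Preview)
Your decomposition via the determinant splitting is a reasonable approach, and part (a) is correct. The gap is in part (b), and you have correctly identified it: the Mittag-Leffler/infinite-product argument you sketch is genuinely problematic. Writing a matrix as a convergent infinite product of elementary matrices requires bounding the matrix sizes and controlling which elementary generators appear at each stage, and there is no obvious way to do this in the stable limit $\mathrm{GL}(R)=\varinjlim_n \mathrm{GL}_n(R)$.

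The key fact you are missing, and which makes (b) trivial, is that for commutative $A$ and $I\subset\Jac(A)$ the map $\SK_1(A)\to\SK_1(A/I)$ is not merely surjective but an \emph{isomorphism}. Equivalently, the sequence
\[
1\to 1+I\to\KTh_1(A)\to\KTh_1(A/I)\to 1
\]
is exact. One way to see this is via Vaserstein's description $\KTh_1(A,I)\cong (1+I)/V(A,I)$: for commutative $A$ the generators $(1+ri)(1+ir)^{-1}$ of $V(A,I)$ are all trivial, so $\KTh_1(A,I)\cong 1+I$, and the composite with $\det$ shows that $1+I\to\KTh_1(A)$ is injective; surjectivity on the right you already have. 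Once you know this, every term in your inverse system for (b) is canonically identified with $\SK_1(R/\Jac(R))$, and (b) is immediate with no limit argument at all.

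The paper takes precisely this exact sequence $1\to 1+\Jac(A)\to\KTh_1(A)\to\KTh_1(A/\Jac(A))\to 1$ as its starting point rather than splitting off $\SK_1$: the left-hand term matches the inverse limit by completeness of $R$ and openness of $\Jac(R)$, the right-hand term is constant since $R/\Jac(R)$ is discrete, and the snake lemma finishes. Your decomposition is essentially a reorganisation of the same content, but you did not spot the exactness statement that collapses (b); without it, the infinite-product route you propose does not obviously go through.
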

\begin{proof}
For any commutative ring $A$ we have an exact sequence
$$
1\mto 1+\Jac(A)\mto \KTh_1(A)\mto \KTh_1(A/\Jac(A))\mto 1.
$$
Since $R/\Jac(R)$ carries the discrete topology we have
$$
\KTh_1(R/\Jac(R))=\widehat{\KTh}_1(R/\Jac(R)).
$$
Because of the completeness of $R$ and because $\Jac(R)$ is open we have
$$
1+\Jac(R)=\varprojlim_{I\in\openideals_R} 1+\Jac(R)/\Jac(R)\cap I.
$$
The claim now follows from the snake lemma.
\end{proof}

Let $\Lambda$ be an adic ring. We are mainly interested in the topological rings
$$
\talg{\Lambda}=\varprojlim_{I\in\openideals_\Lambda}\Lambda/I[T]
$$
where we give the polynomial ring $\Lambda/I[T]$ the discrete topology. As every open two-sided ideal $I$ of $\Lambda$ is finitely generated as left or right ideal, we note that $I\talg{\Lambda}=\ker(\talg{\Lambda}\mto \Lambda/I[T])$ and $\Jac(\talg{\Lambda})=\Jac(\Lambda)\talg{\Lambda}$. In particular, the open ideals of $\talg{\Lambda}$ are again finitely generated.

We suspect that for all these rings $\KTh_1(\talg{\Lambda})$ and $\widehat{\KTh}_1(\talg{\Lambda})$ agree, but we were not able to prove this in general. By the above results they do agree if either $\Lambda$ is commutative or $\Lambda=\Int_p[G]$ for a finite group $G$ and this is all we need for our purposes. The following result is therefore only for the reader's edification.

\begin{prop}\label{prop:units surject onto K1}
Let $\Lambda$ be an adic ring. Then the homomorphisms
$$
\talg{\Lambda}^\times\mto \KTh_1(\talg{\Lambda})\mto \widehat{\KTh}_1(\talg{\Lambda})
$$
are surjective.
\end{prop}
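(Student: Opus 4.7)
The strategy is to analyze $\KTh_1(\talg{\Lambda})$ by reducing modulo the Jacobson radical $\Jac(\talg{\Lambda}) = \Jac(\Lambda)\talg{\Lambda}$. The quotient $\talg{\Lambda}/\Jac(\talg{\Lambda}) \isomorph (\Lambda/\Jac(\Lambda))[T]$ is a polynomial ring over the finite semisimple ring $\Lambda/\Jac(\Lambda) \isomorph \prod_i \Mat_{n_i}(\FF_{q_i})$. By Morita invariance and the Euclidean structure of $\FF_q[T]$, one obtains $\KTh_1((\Lambda/\Jac(\Lambda))[T]) \isomorph \prod_i \KTh_1(\FF_{q_i}[T]) = \prod_i \FF_{q_i}^\times$, and this $\KTh_1$ is realised by the constant units in $(\Lambda/\Jac(\Lambda))^\times$, which lift to $\Lambda^\times \subset \talg{\Lambda}^\times$.

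It then remains to show that the kernel of $\KTh_1(\talg{\Lambda}) \to \KTh_1((\Lambda/\Jac(\Lambda))[T])$ is contained in the image of $1 + \Jac(\Lambda)\talg{\Lambda} \subset \talg{\Lambda}^\times$. Using the $\Jac(\Lambda)\talg{\Lambda}$-adic completeness of $\talg{\Lambda}$, I would filter by powers and work in the successive square-zero subquotients $J = \Jac(\Lambda)^{k-1}\talg{\Lambda}/\Jac(\Lambda)^k\talg{\Lambda}$. For any square-zero two-sided ideal $J \subset \Jac(R)$ in an arbitrary ring $R$, the relative group $\KTh_1(R, J)$ is isomorphic to $J/[R, J]$ via $[1+j] \mapsto j$, and is therefore surjected upon by $1 + J$. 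Compatibly gluing via Mittag-Leffler (the transition maps $1 + \Jac(\Lambda)^k\talg{\Lambda} \to 1 + \Jac(\Lambda)^{k-1}\talg{\Lambda}$ are surjective because their kernels are nil) yields a unit in $1 + \Jac(\Lambda)\talg{\Lambda}$ representing any prescribed kernel class. Combining these steps, every class in $\KTh_1(\talg{\Lambda})$ can be written as $[\lambda \cdot (1+x)]$ with $\lambda \in \Lambda^\times$ and $x \in \Jac(\Lambda)\talg{\Lambda}$, proving surjectivity of the first map.

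The same square-zero analysis applied at each finite level $A_k = \Lambda/\Jac(\Lambda)^k[T]$ (whose Jacobson radical contains the nilpotent ideal $\Jac(\Lambda)/\Jac(\Lambda)^k[T]$) shows that $A_k^\times \to \KTh_1(A_k)$ is surjective. Given a compatible system $(x_k) \in \varprojlim_k \KTh_1(A_k) = \widehat{\KTh}_1(\talg{\Lambda})$, one inductively builds compatible units $u_k \in A_k^\times$ representing $x_k$: lift $u_{k-1}$ to a unit $\tilde{u}_k \in A_k^\times$ (possible since the reduction kernel is nil), then correct by a suitable element of $1 + \ker(A_k \to A_{k-1})$ so that $[u_k] = x_k$ (possible by the square-zero surjection of relative $\KTh_1$ established above). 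The resulting $u = (u_k) \in \varprojlim_k A_k^\times = \talg{\Lambda}^\times$ maps to $(x_k)$, proving surjectivity of the second map (and hence of the composition). The main technical obstacle throughout is this square-zero description of relative $\KTh_1$ and its Mittag-Leffler propagation through the filtration by powers of $\Jac(\Lambda)\talg{\Lambda}$, which crucially relies on the $\Jac(\Lambda)\talg{\Lambda}$-adic completeness of $\talg{\Lambda}$.
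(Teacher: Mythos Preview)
Your argument for the surjectivity of the composite $\talg{\Lambda}^\times\to\widehat{\KTh}_1(\talg{\Lambda})$ (and hence of the second map) is correct and in fact slightly more elementary than the paper's: you only use that $1+J\to\KTh_1(R,J)$ is surjective for \emph{square-zero} $J$ together with an inductive lifting along the nilpotent tower $A_k=\Lambda/\Jac(\Lambda)^k[T]$, whereas the paper organises the same limit via Vaserstein's description of $\KTh_1(R,I)$ and the vanishing $\KTh_2(\Lambda/\Jac(\Lambda)[T])=0$.

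However, your argument for the \emph{first} surjection $\talg{\Lambda}^\times\to\KTh_1(\talg{\Lambda})$ has a genuine gap. Given $x$ in the kernel of $\KTh_1(\talg{\Lambda})\to\KTh_1((\Lambda/\Jac(\Lambda))[T])$, your successive-approximation scheme produces a unit $u\in 1+\Jac(\Lambda)\talg{\Lambda}$ whose class $[u]$ agrees with $x$ in every quotient $\KTh_1(\Lambda/\Jac(\Lambda)^k[T])$. But this only says $[u]=x$ in $\widehat{\KTh}_1(\talg{\Lambda})$, not in $\KTh_1(\talg{\Lambda})$. To conclude $[u]=x$ in $\KTh_1(\talg{\Lambda})$ you would need the map $\KTh_1(\talg{\Lambda})\to\widehat{\KTh}_1(\talg{\Lambda})$ to be injective --- and the paper explicitly remarks that this injectivity is \emph{not} known in general. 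Equivalently, your argument shows $[u]^{-1}x\in\bigcap_k\im\bigl(\KTh_1(\talg{\Lambda},\Jac(\Lambda)^k\talg{\Lambda})\to\KTh_1(\talg{\Lambda})\bigr)$, and this intersection is precisely the (possibly nontrivial) kernel of the completion map. The square-zero trick works fine for \emph{nilpotent} ideals (finitely many steps, no limit needed) but not directly for the merely topologically nilpotent ideal $\Jac(\Lambda)\talg{\Lambda}$. (Your parenthetical about ``transition maps $1+\Jac(\Lambda)^k\talg{\Lambda}\to 1+\Jac(\Lambda)^{k-1}\talg{\Lambda}$ being surjective'' is also garbled: these are inclusions.)

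The paper circumvents this by invoking Vaserstein's theorem \cite[Thm.~1.5]{Oliver:WhiteheadGroups}, which gives a surjection $1+I\twoheadrightarrow\KTh_1(R,I)$ for \emph{any} two-sided ideal $I\subset\Jac(R)$, applied directly to $R=\talg{\Lambda}$ and $I=\Jac(\Lambda)\talg{\Lambda}$. This yields the first surjection in one stroke, without passing to finite levels.
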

\begin{proof}
We note that $\Lambda/\Jac(\Lambda)$ is a finite product of finite-dimensional matrix rings over finite fields. In particular,
$\KTh_n(\Lambda/\Jac(\Lambda)[T])=\KTh_n(\Lambda/\Jac(\Lambda))$ for all $n$ by a celebrated result of Quillen. Since $\Lambda/\Jac(\Lambda)$ is semi-simple, the homomorphism $\Lambda/\Jac(\Lambda)^\times\mto\KTh_1(\Lambda/\Jac(\Lambda))$ is surjective. Moreover, $\KTh_2(\Lambda/\Jac(\Lambda))=0$ since this is true for all finite fields.

By a result of Vaserstein (see \cite[Thm.~1.5]{Oliver:WhiteheadGroups}) we have for any ring $R$ and any two-sided ideal $I\subset \Jac(R)$ an exact sequence
$$
1\mto V(R,I)\mto 1+I\mto \KTh_1(R,I)\mto 1
$$
with $V(R,I)$ the subgroup of $1+I$ generated by the elements $(1+ri)(1+ir)^{-1}$ with $r\in R$ and $i\in I$. Choosing $R=\talg{\Lambda}$ and $I=\Jac(\Lambda)\talg{\Lambda}$ we conclude that $\talg{\Lambda}^\times\mto\KTh_1(\talg{\Lambda})$ is surjective.

Since the homomorphisms
$$
V(\talg{\Lambda},\Jac(\talg{\Lambda}))\mto V(\Lambda/I[T],\Jac(\Lambda/I[T]))
$$
are surjective for any open two-sided ideal $I\subset \Jac(\Lambda)$ of $\Lambda$, we conclude that
$$
\RDer^1\varprojlim_k V(\Lambda/\Jac(\Lambda)^k[T],\Jac(\Lambda)\Lambda/\Jac(\Lambda)^k[T])=0.
$$
Hence,
$$
\KTh_1(\talg{\Lambda},\Jac(\talg{\Lambda}))\mto \varprojlim_k \KTh_1(\Lambda/\Jac(\Lambda)^k[T],\Jac(\Lambda)\Lambda/\Jac(\Lambda)^k[T])
$$
is surjective. Passing to the limit over the exact sequences
\begin{multline*}
1\mto\KTh_1(\Lambda/\Jac(\Lambda)^k[T],\Jac(\Lambda)\Lambda/\Jac(\Lambda)^k[T])\\
\mto\KTh_1(\Lambda/\Jac(\Lambda)^k[T])\mto\KTh_1(\Lambda/\Jac(\Lambda)[T])\mto 1
\end{multline*}
and comparing it to the corresponding sequence for $\talg{\Lambda}$ we conclude that
$$
\KTh_1(\talg{\Lambda})\mto\widehat{\KTh}_1(\talg{\Lambda})
$$
is surjective.
\end{proof}

If $\Lambda$ is an adic ring and $P$ a finitely generated, projective left $\Lambda$-module, we set
$$
P[T]=\Lambda[T]\tensor_{\Lambda}P,\quad \talg{P}=\talg{\Lambda}\tensor_{\Lambda}P, \quad P[[T]]=\Lambda[[T]]\tensor_{\Lambda}P
$$
Note that
$$
\talg{P}=\varprojlim_k P/\Jac(\Lambda)^kP[T],\qquad P[[T]]=\varprojlim_{k} P/\Jac(\Lambda)^k P[[T]],
$$
and that $\Lambda[[T]]$ is again an adic ring.

If $\Lambda'$ is another adic ring acting on $P$ from the right such that $P$ becomes a $\Lambda$-$\Lambda'$-bimodule, then $P/\Jac(\Lambda)P[T]$ is annihilated by some power $\Jac(\Lambda')^m(k)$ of the Jacobson radical of $\Lambda'$. This shows that $\talg{P}$ is a $\talg{\Lambda}$-$\talg{\Lambda'}$-bimodule and therefore induces homomorphisms
$$
\ringtransf_{\talg{P}}\colon \KTh_n(\talg{\Lambda'})\mto \KTh_n(\talg{\Lambda}).
$$
At the same time, it shows that the system $(P/\Jac(\Lambda)^k P[T])_{k\geq 1}$ of $\Lambda/\Jac(\Lambda)^k$-$\Lambda'/\Jac(\Lambda')^{m(k)}$-bimodules induces homomorphisms
$$
\ringtransf_{\talg{P}}\colon \widehat{\KTh}_n(\talg{\Lambda'})\mto\widehat{\KTh}_n(\talg{\Lambda}),
$$
which are compatible with the above homomorphisms. The construction of $\ringtransf_{\talg{P}}$ extends in the obvious manner to complexes $\cmplx{P}$ of $\Lambda$-$\Lambda'$-bimodules which are strictly perfect as complexes of $\Lambda$-modules. By a similar reasoning we also obtain change-of-ring homomorphisms
$$
\ringtransf_{\cmplx{P[[T]]}}\colon \KTh_n(\Lambda'[[T]])\mto \KTh_n(\Lambda[[T]]),
$$
as well as the corresponding versions for the completed $\KTh$-theory.

\section{On \texorpdfstring{$\KTh_1(\talg{\Int_p[G]})$}{K1(Zp[G]<T>)}}

In this section, we use the results of Chinburg, Pappas, and Taylor \cite{CPT:K1ofPadicGroupRingI}, \cite{CPT:K1ofPadicGroupRingII} to analyse $\KTh_1(\talg{\Int_p[G]})$ for a finite group $G$.

For a noetherian integral domain of finite Krull dimension $R$ with field of fractions $Q(R)$ of characteristic $0$ we set
\begin{align*}
\SK_1(R[G])&=\ker(\KTh_1(R[G])\mto \KTh_1(\algc{Q(R)}[G])),\\
\Det(R[G]^\times)&=\im(R[G]^\times\mto\KTh_1(\talg{\ValR_K[G]}))/\SK_1(\talg{\ValR_K[G]}).
\end{align*}
Here, $\algc{Q(R)}$ denotes a fixed algebraic closure of $Q(R)$. For any subgroup $U$ of $R[G]^\times$ we write $\Det(U)$ for its image in $\Det(R[G]^\times)$. We are mainly interested in the cases $R=\talg{\ValR_K}$ or $R=\ValR_K[[T]]$ with $\ValR_K$ the valuation ring of a finite extension $K/\Rat_p$.

Assume that $L/K$ is a finite extension such that $L$ is a splitting field for $G$, i.\,e.\ $L[G]$ is a finite product of matrix algebras over $L$. Let $\mathcal{M}$ be a maximal $\Int_p$-order inside $L[G]$ containing $\ValR_L[G]$. Then $\mathcal{M}$ is a finite product of matrix algebras over $\ValR_L$ such that
$$
\KTh_1(\talg{\mathcal{M}})\mto \KTh_1(Q(\talg{\ValR_K})[G])
$$
is injective. In particular, we have
$$
\SK_1(\talg{\ValR_K[G]})=\ker(\KTh_1(\talg{\ValR_K[G]})\mto \KTh_1(\talg{\mathcal{M}})).
$$
The same reasoning also applies to $\SK_1(\ValR_K[G][[T]])$. Moreover, note that the group $\KTh_1(\talg{\mathcal{M}})$ injects into $\KTh_1(\mathcal{M}[[T]])$, such that we also have an injection $\Det(\talg{\ValR_K[G]}^\times)\subset\Det(\ValR_K[G][[T]]^\times)$.

\begin{lem}\label{lem:SK1 for power series rings}
For any finite group $G$ and any finite field extension $K/\Rat_p$, the inclusion $\ValR_K[G]\mto\ValR_K[G][[T]]$ induces an isomorphism
$$
\SK_1(\ValR_K[G])\isomorph\SK_1(\ValR_K[G][[T]]).
$$
In particular,
$$
\KTh_1(\ValR_K[G][[T]],T\ValR_K[G][[T]])=\Det(1+T\ValR_K[G][[T]]).
$$
\end{lem}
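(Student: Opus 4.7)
First, observe that the ring homomorphism $\ValR_K[G][[T]] \to \ValR_K[G]$ given by $T \mapsto 0$ is a retraction of the natural inclusion, and similarly for $\mathcal{M}[[T]] \to \mathcal{M}$. Functoriality of $\KTh_1$ yields a split short exact sequence
\[
1 \to \KTh_1(\ValR_K[G][[T]], T\ValR_K[G][[T]]) \to \KTh_1(\ValR_K[G][[T]]) \to \KTh_1(\ValR_K[G]) \to 1,
\]
a parallel one with $\mathcal{M}[[T]]$ in place of $\ValR_K[G][[T]]$, and a morphism between them compatible with the splittings. Because of the splittings, $\SK_1(\ValR_K[G][[T]])$ decomposes as the direct sum of $\SK_1(\ValR_K[G])$ and the kernel of the induced map on relative $\KTh_1$-groups. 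The claimed isomorphism $\SK_1(\ValR_K[G]) \isomorph \SK_1(\ValR_K[G][[T]])$ is therefore equivalent to proving the injectivity of the map $\KTh_1(\ValR_K[G][[T]], T\ValR_K[G][[T]]) \to \KTh_1(\mathcal{M}[[T]], T\mathcal{M}[[T]])$.

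I would then make both relative $K$-groups explicit. Writing $\mathcal{M} \isomorph \prod_i M_{n_i}(\ValR_L)$, Morita equivalence together with the classical identification of $\KTh_1$ of a commutative semilocal ring with its units gives $\KTh_1(\mathcal{M}[[T]], T\mathcal{M}[[T]]) = \prod_i (1 + T\ValR_L[[T]])$. On the source, Vaserstein's theorem as cited, applied with the ideal $T\ValR_K[G][[T]] \subseteq \Jac(\ValR_K[G][[T]])$, gives $\KTh_1(\ValR_K[G][[T]], T\ValR_K[G][[T]]) = (1 + T\ValR_K[G][[T]])/V$, with $V$ generated by elements $(1+xy)(1+yx)^{-1}$ for $x \in \ValR_K[G][[T]]$ and $y \in T\ValR_K[G][[T]]$. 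The problem thus reduces to showing that the kernel of the reduced-determinant map $1 + T\ValR_K[G][[T]] \to \prod_i (1 + T\ValR_L[[T]])$ is contained in $V$.

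This is the heart of the proof and the main obstacle. My plan is to exploit the $T$-adic filtration $F^n := 1 + T^n\ValR_K[G][[T]]$, whose successive quotients $F^n/F^{n+1}$ are canonically isomorphic to $\ValR_K[G]$ as additive groups. A direct expansion of the Vaserstein generators at lowest $T$-order shows that their images in each graded piece span the commutator submodule $[\ValR_K[G], \ValR_K[G]]$, while the graded form of the reduced-determinant map is the reduced trace $\ValR_K[G] \to \prod_i \ValR_L$; classical character theory over a splitting field identifies the kernel of this graded map with the commutator submodule up to finite $\ValR_K$-torsion. The proof is then to be completed by an inductive lifting: given $u \in 1 + T\ValR_K[G][[T]]$ mapping to $1$, one successively multiplies $u$ by elements of $V$ so as to raise its $T$-adic valuation by one at each step, using $T$-adic completeness of $1 + T\ValR_K[G][[T]]$ to assemble the adjustments into a genuine element of $V$ whose class equals that of $u$.

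Finally, the ``In particular'' assertion follows immediately once the first part is established. The splitting $T\mapsto 0$ forces $\SK_1(\ValR_K[G][[T]]) \cap \KTh_1(\ValR_K[G][[T]], T\ValR_K[G][[T]]) = 0$, so the relative $\KTh_1$-group injects into $\KTh_1(\ValR_K[G][[T]])/\SK_1(\ValR_K[G][[T]])$, and its image there is precisely the image of $1 + T\ValR_K[G][[T]]$, which by the definition of $\Det$ is $\Det(1 + T\ValR_K[G][[T]])$.
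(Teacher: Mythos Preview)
The paper does not prove the first isomorphism here; it simply cites \cite[Prop.~5.4]{Witte:NoncommutativeLFunctions}. Your deduction of the ``In particular'' clause from the first part matches the paper's: the splitting $T\mapsto 0$ gives $\SK_1(\ValR_K[G][[T]])\cap\KTh_1(\ValR_K[G][[T]],T\ValR_K[G][[T]])=1$, and Vaserstein supplies the surjection from $1+T\ValR_K[G][[T]]$.

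Your direct attack on the first part via the $T$-adic filtration is close to correct, but two points need repair. First, the qualifier ``up to finite $\ValR_K$-torsion'' is either wrong or an unnecessary hedge, and if taken literally it breaks the argument. The kernel of the reduced trace $\ValR_K[G]\to\prod_i\ValR_L$ is \emph{exactly} the additive commutator $[\ValR_K[G],\ValR_K[G]]$: the trace factors through $\ValR_K[C_G]$, and the character map $\ValR_K[C_G]\to\prod_i\ValR_L$ is injective because it becomes an isomorphism over $L$. Were there a genuine torsion discrepancy you could not kill the leading coefficient of $u$ by an element of $V$, and the induction would stall at the first step.

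Second, and more seriously, the convergence step is unjustified. Your construction yields $v_n\in V\cap F^n$ with $u=\lim_n v_1\cdots v_n$ in $1+T\ValR_K[G][[T]]$, but this only shows $u\in\overline{V}$. To conclude $u\in V$ you need $V$ closed, which is not automatic and which you do not argue. It is in fact true: since $\ValR_K[G][[T]]$ is adic, $\KTh_1(\ValR_K[G][[T]])=\widehat{\KTh}_1(\ValR_K[G][[T]])$ is profinite by Fukaya--Kato, its direct summand $\KTh_1(\ValR_K[G][[T]],T\ValR_K[G][[T]])$ is profinite, and the continuous surjection from the profinite group $1+T\ValR_K[G][[T]]$ then has closed kernel. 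A cleaner way to finish, avoiding $V$ altogether, is to note that your lifting makes the image of $u$ in $\KTh_1(\ValR_K[G][T]/(T^n))$ trivial for every $n$, whence $u=1$ in $\KTh_1(\ValR_K[G][[T]])$ by the identification $\KTh_1=\widehat{\KTh}_1$ for adic rings.
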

\begin{proof}
The first equality is proved in \cite[Prop.~5.4]{Witte:NoncommutativeLFunctions}. Since $T\in\Jac(\ValR_K[G][[T]])$, the map
$$
1+T\ValR_K[G][[T]]\mto\KTh_1(\ValR_K[G][[T]],T\ValR_K[G][[T]])
$$
is surjective by the result of Vaserstein \cite[Thm.~1.5]{Oliver:WhiteheadGroups} that we already used in the proof of Prop~\ref{prop:units surject onto K1}. The decomposition
$$
\KTh_1(\ValR_K[G][[T]])=\KTh_1(\ValR_K[G])\times\KTh_1(\ValR_K[G][[T]],T\ValR_K[G][[T]])
$$
induced by the inclusion $\ValR_K[G]\mto \ValR_K[G][[T]]$ and the evaluation $T\mapsto 0$ shows that
$$
\SK_1(\ValR_K[G][[T]])\cap \KTh_1(\ValR_K[G][[T]],T\ValR_K[G][[T]])=1.
$$
\end{proof}

\begin{lem}\label{lem:p torsion criterion}
Let $L/K/\Rat_p$ be finite extensions such that $L$ is a splitting field for $G$ and let $\mathcal{M}$ be a maximal $\Int_p$-order inside $L[G]$ containing $\ValR_L[G]$. Assume that $f$ is in the intersection of $\KTh_1(\talg{\mathcal{M}},\Jac(\talg{\mathcal{M}}))$ and $\Det(1+\Jac(\ValR_K[G])\ValR_K[G][[T]])$ inside $\KTh_1(\mathcal{M}[[T]])$. Then there exists $n\geq 0$ such that $f^{p^n}\in\Det(\talg{\ValR_K[G]})$.
\end{lem}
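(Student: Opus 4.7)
The plan is to pass from the multiplicative setting to the additive one via the $p$-adic logarithm, descend from $\mathcal{M}$ to $\ValR_K[G]$ at the level of reduced traces modulo commutators, and then exponentiate back to $\talg{\ValR_K[G]}^\times$.

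First, I would arrange convergence. Since $\ValR_K[G]/p\ValR_K[G]$ is a finite-dimensional algebra over a finite field, its Jacobson radical is nilpotent, so $\Jac(\ValR_K[G])^N\subset p\ValR_K[G]$ for some $N\geq 1$. Pick a representative $u\in 1+\Jac(\ValR_K[G])\ValR_K[G][[T]]$ of $f$, and replace $u$ by $u^{p^n}$ (and $f$ by $f^{p^n}$) for $n$ large enough that $u\in 1+p\ValR_K[G][[T]]$; then $x:=\log u\in p\ValR_K[G][[T]]$ converges. For the irreducible $L$-representations $\rho_i$ of $G$, hypothesis (a) gives $f_i:=\det\rho_i(u)\in 1+p\talg{\ValR_L}$, so $\log f_i=\Tr\rho_i(x)\in p\talg{\ValR_L}$ is a restricted power series.

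Second, I would descend through the abelianization. The isomorphism $L[G]\cong\prod_iM_{n_i}(L)$ yields $L[G]/[L[G],L[G]]\cong\prod_iL$ via $a\mapsto(\Tr\rho_i(a))_i$, and $\ValR_K[G]_{\mathrm{ab}}:=\ValR_K[G]/[\ValR_K[G],\ValR_K[G]]$ is a free $\ValR_K$-module of rank equal to the number of conjugacy classes of $G$ (spanned by class sums), embedded as a full $\ValR_K$-lattice in $\prod_iL$ whose $p$-adic topology coincides with the restriction of that of $\prod_i\ValR_L$. The image $[x]\in\ValR_K[G]_{\mathrm{ab}}[[T]]$ of $x$ corresponds under this identification to the tuple $(\log f_i)_i$, so its coefficients lie in $p\ValR_K[G]_{\mathrm{ab}}$ and tend to $0$ in the lattice topology; hence $[x]\in p\ValR_K[G]_{\mathrm{ab}}\langle T\rangle$. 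Using the surjections $p^k\ValR_K[G]\twoheadrightarrow p^k\ValR_K[G]_{\mathrm{ab}}$, I lift $[x]$ coefficient-by-coefficient to $x_r\in p\talg{\ValR_K[G]}$, so that every coefficient of $x-x_r$ is a commutator in $\ValR_K[G]$.

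Third, I would compare the units. Set $v:=\exp(x_r)\in 1+p\talg{\ValR_K[G]}\subset\talg{\ValR_K[G]}^\times$. Under $\KTh_1(\mathcal{M}[[T]])\cong\prod_i\ValR_L[[T]]^\times$, the $i$-th component of $\Det(v)$ is $\det\rho_i(v)=\exp\Tr\rho_i(x_r)$, while that of $f$ is $\det\rho_i(u)=\exp\Tr\rho_i(x)$; since $\Tr\rho_i$ is applied coefficient-wise and vanishes on commutators in each $M_{n_i}(\ValR_L)$, we have $\Tr\rho_i(x-x_r)=0$. All components thus coincide, so $\Det(v)=f$ already in $\KTh_1(\mathcal{M}[[T]])$, and since $v\in\talg{\ValR_K[G]}^\times$ this exhibits the original $f^{p^n}$ as an element of $\Det(\talg{\ValR_K[G]})$.

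The main obstacle is the descent in Step 2: showing that componentwise restricted-ness of the traces $\log f_i\in\talg{\ValR_L}$ forces $x$ itself to be restricted modulo commutators in $\ValR_K[G]\langle T\rangle$. The subtlety is that $\ValR_K[G]$ is typically a proper $\ValR_K$-sublattice of $\mathcal{M}\cong\prod_iM_{n_i}(\ValR_L)$, so componentwise control at the matrix-algebra level does not transfer directly to $\ValR_K[G]$; it is essential to pass through the abelianization, where both sides embed as full $\ValR_K$-lattices in a common finite-dimensional $L$-vector space and the $p$-adic topologies are automatically compatible. Since the final comparison is realised inside $\KTh_1(\mathcal{M}[[T]])$ itself and not only modulo $\SK_1$, no further $p$-power beyond the one absorbing logarithm convergence is required.
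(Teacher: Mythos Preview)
Your argument is essentially the paper's own proof, carried out by hand rather than by citing \cite[Prop.~2.4]{CPT:K1ofPadicGroupRingI}: both pass via the $p$-adic logarithm to the conjugacy-class module $\ValR_K[C_G]=\ValR_K[G]_{\mathrm{ab}}$, perform the intersection $\talg{\ValR_L}[C_G]\cap \ValR_K[[T]][C_G]=\talg{\ValR_K}[C_G]$ (your lattice-topology argument is exactly this), and exponentiate back. The one genuine slip is at $p=2$: with $x_r\in 2\,\talg{\ValR_K[G]}$ the series $\exp(x_r)$ need not converge (the terms $x_r^{2^k}/(2^k)!$ have $2$-adic valuation bounded by $s_2(2^k)=1$), and likewise $\exp\log$ is not the identity on $1+2\ValR_L[[T]]$; the paper avoids this by arranging $f^{p^n}\in\Det(1+p^2R[G])$ before taking logarithms, which you can do as well at the cost of one further power of $p$.
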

\begin{proof}
Let $p^k$ be the order of a $p$-Sylow subgroup of $G$. Since $\mathcal{M}/p^{k+2}\mathcal{M}$ is a finite ring, some power of $\Jac(\mathcal{M})$ is contained in $p^{k+2}\mathcal{M}$. Now $p^k\mathcal{M}\subset\ValR_L[G]$
\cite[Thm.~1.4]{Oliver:WhiteheadGroups}. For large $n$ we thence have
$$
f^{p^n}\in\Det(1+p^2\talg{\ValR_L[G]})\cap\Det(1+p^2\ValR_K[G][[T]]).
$$
By \cite[Prop.~2.4]{CPT:K1ofPadicGroupRingI} the $p$-adic logarithm induces $R$-linear isomorphisms
$$
v\colon \Det(1+p^2R[G])\mto p^2R[C_G]
$$
where $C_G$ is the set of conjugacy classes of $G$ and $R$ is equal to either $\talg{\ValR_K}$ or $\ValR_K[[T]]$ for any $K$. In particular,
$$
v(f^{p^n})\in p^2\talg{\ValR_L[C_G]}\cap p^2\ValR_K[[T]][C_G]=p^2\talg{\ValR_K}[C_G].
$$
Hence, $f^{p^n}\in \Det(1+p^2\talg{\ValR_K[G]})$.
\end{proof}

For any finite group $G$ we let $[G,G]$ denote the commutator subgroup of $G$ and set $G^\ab=G/[G,G]$.

\begin{lem}\label{lem:torsionfreeness p group case}
Let $G$ be a finite $p$-group and $K/\Rat_p$ a finite unramified extension. Let further $A$ denote the kernel of $\ValR_K[G]\mto \ValR_K[G^\ab]$. The abelian group
$$
\Det(1+A\ValR_K[G][[T]])/\Det(1+A\talg{\ValR_K[G]})
$$
is torsionfree.
\end{lem}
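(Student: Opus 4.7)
The strategy is to use the $p$-adic logarithm of Chinburg--Pappas--Taylor to reduce the torsion-freeness of this $\Det$-quotient to the visibly $p$-torsion-free quotient of additive modules $(\ValR_K[[T]]/\talg{\ValR_K})[C_G]$.

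Since $G$ is a $p$-group, the residue ring $\FF_q[G]$ is local with maximal ideal equal to the augmentation ideal, so $A\subset\Jac(\ValR_K[G])$. Consequently both $1+A\ValR_K[G][[T]]$ and $1+A\talg{\ValR_K[G]}$ are pro-$p$ groups, as are their images under $\Det$. In particular the quotient has no prime-to-$p$ torsion, and by a descending induction on the exponent it suffices to show that the quotient has no element of order exactly $p$: given $f\in 1+A\ValR_K[G][[T]]$ with $\Det(f)^p\in\Det(1+A\talg{\ValR_K[G]})$, one must conclude $\Det(f)\in\Det(1+A\talg{\ValR_K[G]})$.

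Write $R=\ValR_K$. Choose $m$ large enough, using the binomial expansion in the $p$-group ring, so that $f^{p^m}\in 1+p^2R[G][[T]]$ and $\Det(1+A\talg{R[G]})^{p^m}\subset\Det(1+p^2\talg{R[G]})$. Apply the $R$-linear logarithm isomorphism $v$ of \cite[Prop.~2.4]{CPT:K1ofPadicGroupRingI}, which is available compatibly for both $R[G][[T]]$ and $\talg{R[G]}$ through the inclusion $\talg{R[G]}\hookrightarrow R[G][[T]]$. Additivity of $v$ gives $v(f^{p^{m+1}})=p\cdot v(f^{p^m})$, and the hypothesis places the left side in $p^2\talg{R[C_G]}$, while $v(f^{p^m})$ lies a priori in $p^2R[[T]][C_G]$. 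Because $K/\Rat_p$ is unramified, $\talg{R}=R[T]^{\wedge_p}$ is the $p$-adic completion of the polynomial ring, so $R[[T]]/\talg{R}$ is $p$-torsion-free (coefficientwise, $pa_i\to 0$ $p$-adically iff $a_i\to 0$). This forces $v(f^{p^m})\in p^2\talg{R[C_G]}$, hence $f^{p^m}\in\Det(1+p^2\talg{R[G]})$.

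The main obstacle is the concluding descent from $f^{p^m}$ to $f$. First, write $\Det(f^{p^m})=\Det(x)$ with $x\in 1+p^2\talg{R[G]}$; projecting to the commutative quotient $\talg{R[G^{\ab}]}$, where $\SK_1$ vanishes, and using that $\Det(f^{p^m})$ dies in $\Det(\talg{R[G^{\ab}]}^\times)$ (since $A$ maps to $0$ in $R[G^{\ab}]$), one deduces that $x\in 1+A\talg{R[G]}\cap 1+p^2\talg{R[G]}$, so already $\Det(f)^{p^m}\in\Det(1+A\talg{R[G]})$. To pass from the $p^m$-th power back to $\Det(f)$ itself, one combines the above coefficient-wise comparison with the full range of the Oliver--Taylor integral logarithm $\Gamma$, which, for a $p$-group and unramified $K$, extends the Chinburg--Pappas--Taylor isomorphism beyond $1+p^2$ to yield a homomorphism $\Det(1+A R[G][[T]])\to R[[T]][C_G]$ whose kernel is trivial on $1+A$ (since the torsion elements of $K_1(R[G][[T]])$ come from $\mu(R)\times G^{\ab}$, none of which meet $1+A$) and which carries $\Det(1+A\talg{R[G]})$ into $\talg{R[C_G]}$. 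Injectivity of the induced map $\Det(1+AR[G][[T]])/\Det(1+A\talg{R[G]})\hookrightarrow (R[[T]]/\talg{R})[C_G]$ into a $p$-torsion-free module then yields the result.
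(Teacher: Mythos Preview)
Your final paragraph is the whole proof, and it is essentially the paper's argument: the integral group logarithm of Chinburg--Pappas--Taylor \cite[Thm.~3.16]{CPT:K1ofPadicGroupRingI}, for a $p$-group $G$ and unramified $K$, restricts to an \emph{isomorphism}
\[
\nu\colon \Det(1+AR[G])\xrightarrow{\ \sim\ } p\,\phi(AR[G])\subset R[C_G]
\]
for both $R=\talg{\ValR_K}$ and $R=\ValR_K[[T]]$, compatibly with the inclusion (once one fixes compatible Frobenius lifts, which you omit to mention). Since $\phi(AR[G])$ is finite free over $R$ and $\ValR_K[[T]]/\talg{\ValR_K}$ is torsionfree, the quotient of $\Det$-groups is torsionfree. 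That is the paper's proof in two lines.

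Everything before your last paragraph is redundant. You assume $\Det(f)^p\in\Det(1+A\talg{R[G]})$, run the $1+p^2$ logarithm argument, and announce as progress that $\Det(f)^{p^m}\in\Det(1+A\talg{R[G]})$; but this is immediate from the hypothesis, since $\Det(f)^{p^m}=(\Det(f)^p)^{p^{m-1}}$. The restricted logarithm $v$ on $1+p^2$ from \cite[Prop.~2.4]{CPT:K1ofPadicGroupRingI} cannot by itself bridge the gap from $f^{p^m}$ back to $f$, which is why you are forced to invoke the full integral logarithm in the end; but once you grant yourself that tool, the detour through $1+p^2$ buys nothing. Your justification for injectivity of $\Gamma$ on $\Det(1+A)$ via the description of torsion in $\KTh_1$ is plausible but indirect; the cleaner route is simply to cite \cite[Thm.~3.16]{CPT:K1ofPadicGroupRingI}, which already asserts that $\nu$ is an isomorphism on $\Det(1+AR[G])$ for rings $R$ satisfying the standing hypothesis there (both $\talg{\ValR_K}$ and $\ValR_K[[T]]$ qualify).
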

\begin{proof}
Let $R$ be equal to either $\talg{\ValR_K}$ or $\ValR_K[[T]]$. Write $C_G$ for set of the conjugacy classes of $G$ and let $\phi(AR[G])$ denote the kernel of the natural $R$-linear map $R[C_G]\mto R[G^\ab]$. Note that $\phi(AR[G])$ is finitely generated and free as an $R$-module.
For any choice of Frobenius lifts compatible with the inclusion $\talg{\ValR_K[G]}\subset\ValR_K[[T]]$ we obtain a commutative diagram
$$
\xymatrix{\Det(1+A\talg{\ValR_K[G]})\ar[r]\ar[d]^{\isomorph}_{\nu}&\Det(1+A\ValR_K[G][[T]])\ar[d]^\isomorph_{\nu}\\
          p\phi(A\talg{\ValR_K[G]})\ar[r]&p\phi(A\ValR_K[G][[T]])}
$$
with the horizontal arrows induced by the natural inclusion and the vertical isomorphisms induced by the integral group logarithm \cite[Thm.~3.16]{CPT:K1ofPadicGroupRingI}. Since $\ValR_K[[T]]/\talg{\ValR_K}$ is a torsionfree abelian group, the same is true for the group $p\phi(A\ValR_K[G][[T]])/p\phi(A\talg{\ValR_K[G]})$.
\end{proof}

Recall that a semi-direct product $G=\Int/s\Int\rtimes P$ with $s$ prime to $p$ is called $p$-$\Rat_p$-elementary if $P$ is a $p$-group and the image of $t\colon P\mto(\Int/s\Int)^\times$ given by the action of $P$ on $\Int/s\Int$ lies in $\Gal(\Rat_p(\zeta_s)/\Rat_p)\subset(\Int/s\Int)^\times$. For any divisor $m$ of $s$, and $R=\Int_p$, $R=\talg{\Int_p}$, or $R=\Int_p[[T]]$ we set
$$
R[m]=\Int[\zeta_m]\tensor_{\Int}R
$$
and let $R[m][P;t]$ denote the twisted group ring for $t$, i.\,e.\ $\sigma r=t(\sigma)(r)\sigma$ for elements $r\in R[m]$, $\sigma\in P$. Set
$$
H_m=\ker(t\colon P\mto \Gal(\Rat_p(\zeta_m)/\Rat_p)),\qquad B_m=P/H_m.
$$
We may write
$$
R[G]=\prod_{m\mid s}\Int_p[m][P;t]
$$
\cite[Prop.~11.6]{Oliver:WhiteheadGroups}. We then see that $R[G]$ is a finitely generated, projective module over the subring
$$
\prod_{m\mid s}R[m][H_m]\subset\prod_{m\mid s}R[m][P;t].
$$
We let
$$
r\colon \Det(R[G]^\times)\mto \prod_{m\mid s}\Det(R[m][H_m]^\times)
$$
denote the corresponding restriction map. We further set
\begin{align*}
A&=\ker(\Int_p[G]\mto \prod_{m\mid s}\Int_p[m][P/[H_m,H_m];t]),\\
A_m&=\ker(\Int_p[m][H_m]\mto \Int_p[m][H_m^\ab]),
\end{align*}
and let $b_m$ denote the order of $B_m$.

\begin{lem}\label{lem:p elementary case}
With the notation as above,
\begin{enumerate}
\item $R[m][P/[H_m,H_m];t]$ is isomorphic to the ring of $b_m\times b_m$ matrices over its centre $R[m][H_m^\ab]^{B_m}$,
\item $r$ induces an isomorphism
$$
r\colon\Det(1+AR[G])\mto \prod_{m\mid s} \Det(1+A_mR[m][H_m])^{B_m}.
$$
\item $\Det(1+A\Int_p[G][[T]])/\Det(1+A\talg{\Int_p[G]})$ is a torsionfree abelian group.
\end{enumerate}
\end{lem}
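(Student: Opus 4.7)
\emph{Plan of proof.} The three parts build on one another: (1) is a structural statement about twisted group rings; (2) identifies $\Det$ on the twisted ring with $\Det$ on the centre via Morita equivalence; (3) deduces the torsionfreeness statement by combining (2) with Lemma~\ref{lem:torsionfreeness p group case} applied to each factor.

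For (1), I would exploit the extension $1\mto H_m^{\ab}\mto P/[H_m,H_m]\mto B_m\mto 1$ to rewrite
$$
R[m][P/[H_m,H_m];t]=\bigl(R[m][H_m^{\ab}]\bigr)\ast B_m,
$$
a crossed product in which $B_m$ acts simultaneously on $R[m]$ through the faithful embedding $t\colon B_m\hookrightarrow\Gal(\Rat_p(\zeta_m)/\Rat_p)$ and on $H_m^{\ab}$ by conjugation. Because $\Int_p[\zeta_m]/\Int_p$ is unramified Galois of degree $b_m$ with group $B_m$, the ring $R[m]/R[m]^{B_m}$ is étale Galois with group $B_m$, and descent identifies the crossed product $R[m]\ast B_m$ with the matrix ring $\Mat_{b_m}(R[m]^{B_m})$. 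Tensoring up by the $B_m$-equivariant subring $R[m][H_m^{\ab}]$ over $R[m]$ gives the claimed isomorphism with centre $R[m][H_m^{\ab}]^{B_m}$.

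For (2), Morita equivalence together with (1) yields $\Det(R[m][P/[H_m,H_m];t]^\times)=\Det(R[m][H_m^{\ab}]^{B_m,\times})$, while restriction along $R[m][H_m]\mto R[m][H_m^{\ab}]$ factors $\Det$ through abelianisation. The map $r$ therefore lands in the $B_m$-invariants $\Det(1+A_mR[m][H_m])^{B_m}$ for each $m$, and Wedderburn decomposition reduces the surjectivity to the identification of centres from (1). Injectivity is a consequence of $\Int_p[G]=\prod_{m\mid s}\Int_p[m][P;t]$: an element of $1+AR[G]$ is determined by its reductions modulo $A$ on each factor, and within each factor by its image in $R[m][H_m^{\ab}]^{B_m}$.

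For (3), note that for every divisor $m\mid s$ with $s$ prime to $p$ the ring $\Int_p[m]$ is the valuation ring of the unramified extension $\Rat_p(\zeta_m)/\Rat_p$, and $H_m$ is a subgroup of the $p$-group $P$. Lemma~\ref{lem:torsionfreeness p group case} therefore applies and shows that
$$
\Det(1+A_m\Int_p[m][H_m][[T]])\bigm/\Det(1+A_m\talg{\Int_p[m][H_m]})
$$
is torsionfree for each $m$, hence so is the subgroup of $B_m$-invariants, and hence so is the product over $m\mid s$. By (2) applied to both $R=\Int_p[[T]]$ and $R=\talg{\Int_p}$, the quotient $\Det(1+A\Int_p[G][[T]])/\Det(1+A\talg{\Int_p[G]})$ embeds into this torsionfree product, proving the claim.

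I expect the main obstacle to be the verification of (1): the identification of the crossed product with a full matrix ring requires one to check that the natural map from the crossed product to $\End_{R[m]^{B_m}}(R[m])$ extended by $H_m^{\ab}$ is an isomorphism of $R[m]^{B_m}[H_m^{\ab}]^{B_m}$-algebras, which rests on the normal basis theorem for the unramified extension $\Int_p[\zeta_m]/\Int_p$. Once (1) is in hand, (2) and (3) are essentially formal consequences.
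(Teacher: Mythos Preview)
Your argument for (3) is exactly what the paper does: once (2) is in hand, apply Lemma~\ref{lem:torsionfreeness p group case} to each $p$-group $H_m$ over the unramified ring $\Int_p[m]$, pass to $B_m$-invariants (a subgroup of a torsionfree group is torsionfree), and take the product. Your sketch of (1) is also headed in the right direction; the paper simply cites Wall \cite[Thm.~8.3]{Wall:NormsOfUnits}, and your crossed-product/descent argument is essentially an outline of that proof. One point needing more care is the ``tensoring up'' step: since $B_m$ acts nontrivially on $H_m^{\ab}$ by conjugation, $R[m][H_m^{\ab}]\ast B_m$ is not simply $R[m][H_m^{\ab}]\tensor_{R[m]}(R[m]\ast B_m)$, and you need the faithfulness of the $B_m$-action on $R[m]$ to conclude that $R[m][H_m^{\ab}]$ is still Galois over its $B_m$-invariants.

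The genuine gap is (2), which you misjudge as ``essentially formal.'' First, your injectivity argument is incoherent: an element of $1+AR[G]$ has reduction $1$ modulo $A$, so it cannot be ``determined by its reductions modulo $A$.'' More fundamentally, you are conflating two different maps. The map $r$ in the lemma is \emph{restriction} along the subring inclusion $\prod_m R[m][H_m]\subset R[G]$ (a transfer-type map, coming from $R[G]$ viewed as a module over the subring), whereas your argument invokes Morita equivalence and the reduced-norm map to the centre of the \emph{quotient} $R[G]/AR[G]$. These maps have different targets and are linked only by a nontrivial compatibility. The paper does not argue (2) directly at all; it invokes \cite[Thm.~6.2 and Diagram~(6.7)]{CPT:K1ofPadicGroupRingI}, where the isomorphism is proved using the integral group logarithm---the same machinery that underlies Lemma~\ref{lem:torsionfreeness p group case}. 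That is where the real work lies, not in (1).
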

\begin{proof}
Assertion $(1)$ is a theorem of Wall \cite[Thm.~8.3]{Wall:NormsOfUnits}. Assertion $(2)$ follows from \cite[Thm~6.2 and Diagramm~(6.7)]{CPT:K1ofPadicGroupRingI}. Assertion $(3)$ is then a consequence of Lemma~\ref{lem:torsionfreeness p group case} and $(2)$.
\end{proof}

We now return to the case that $G$ is an arbitrary finite group. For $R=\Int_p$, $R=\talg{\Int_p}$, and $R=\Int_p[[T]]$ and any subgroup $H\subset G$ we write $\Res_H^G$ for the change-of-ring homomorphism $\KTh_1(R[G])\mto \KTh_1(R[H])$ induced by the $\Int_p[H]$-$\Int_p[G]$-bimodule $\Int_p[G]$.

\begin{lem}\label{lem:induction}
Let $f\in \KTh_1(\Int_p[G][[T]])$  such that
\begin{enumerate}
\item for any $p$-$\Rat_p$-elementary group $H$, $\Res_H^G f$ is in the image of the homomorphism $\KTh_1(\talg{\Int_p[H]})\mto \KTh_1(\Int_p[H][[T]])$,
\item $f^{p^n}$ is in the image of $\KTh_1(\talg{\Int_p[G]})\mto \KTh_1(\Int_p[G][[T]])$ for some $n\geq 0$.
\end{enumerate}
Then $f$ is in the image of $\KTh_1(\talg{\Int_p[G]})\mto \KTh_1(\Int_p[G][[T]])$.
\end{lem}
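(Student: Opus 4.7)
The plan is to reduce the problem, via the detection theorem of Chinburg, Pappas and Taylor, to the $p$-$\Rat_p$-elementary case handled by Lemma~\ref{lem:p elementary case}(3), and then to use hypothesis~(2) to dispose of the residual $p$-power torsion.

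First I would split off the part of $f$ coming from $\KTh_1(\Int_p[G])$. The evaluation $T\mapsto 0$ induces decompositions
$$
\KTh_1(\Int_p[G][[T]])=\KTh_1(\Int_p[G])\times\KTh_1(\Int_p[G][[T]],T\Int_p[G][[T]]),
$$
and analogously for $\talg{\Int_p[G]}$. The $\KTh_1(\Int_p[G])$-factor is common to both sides, so no statement is needed there. Using Vaserstein's theorem as in the proof of Lemma~\ref{lem:SK1 for power series rings}, the remaining factor is generated by $1+T\Int_p[G][[T]]$. Lemma~\ref{lem:SK1 for power series rings} further identifies the $\SK_1$-parts of both $\Int_p[G][[T]]$ and $\talg{\Int_p[G]}$ with $\SK_1(\Int_p[G])$, so it is enough to show that the class of $f$ in
$$
Q(G)=\Det(1+T\Int_p[G][[T]])/\Det(1+T\talg{\Int_p[G]})
$$
is trivial.

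By the first hypothesis, $\Res_H^G\bar f=0$ in $Q(H)$ for every $p$-$\Rat_p$-elementary $H\subset G$. For such $H$ I would filter $1+T\Int_p[H][[T]]$ by the subgroups $1+\Jac(\Int_p[H])^{k}T\Int_p[H][[T]]$ and compare the associated graded pieces with those of $1+T\talg{\Int_p[H]}$, using the result of Emerton--Kisin on the abelianised quotient and Lemma~\ref{lem:p elementary case}(3) on the kernel of abelianization; this should show that $Q(H)$ is torsionfree whenever $H$ is $p$-$\Rat_p$-elementary. The Chinburg--Pappas--Taylor induction principle then asserts that $\KTh_1$ of a $p$-adic group ring is detected by its restrictions to $p$-$\Rat_p$-elementary subgroups up to $p$-power torsion; once transported to the $T$-extended setting, this forces $\bar f\in Q(G)$ itself to be a $p$-power torsion element.

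Finally, the second hypothesis provides $\bar f^{p^n}=0$, whence $\bar f=0$, and $f$ lifts to $\KTh_1(\talg{\Int_p[G]})$ as required. The main obstacle I anticipate is the precise formulation of ``detection by $p$-$\Rat_p$-elementary subgroups up to $p$-power torsion'' at the level of the quotient $Q(G)$, since Lemma~\ref{lem:p elementary case}(3) addresses only the specific ideal $A$ rather than the full $1+T\Int_p[G][[T]]$. Propagating torsionfreeness across the filtration by powers of $\Jac(\Int_p[G])$ and gluing the elementary-case vanishings into a global statement on $Q(G)$ will be the delicate technical point; hypothesis~(2) is then only needed to dispose of a torsion ambiguity rather than to do any substantive work.
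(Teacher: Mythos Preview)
Your high-level strategy—use the Chinburg--Pappas--Taylor induction machinery to control $f$ up to a torsion ambiguity, then kill that ambiguity with hypothesis~(2)—is the right one, but you have the torsion direction backwards, and as written the final step does not close. The relevant CPT result (\cite[Thm.~4.3]{CPT:K1ofPadicGroupRingII}) says that if $\Res_H^G f$ lies in the image for every $p$-$\Rat_p$-elementary subgroup $H$, then $f^{\ell}$ lies in the image for some integer $\ell$ \emph{prime to $p$}. Thus $\bar f\in Q(G)$ is prime-to-$p$ torsion, not $p$-power torsion. Combined with hypothesis~(2), which makes $\bar f$ also $p^n$-torsion, B\'ezout then gives $\bar f=0$. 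In your version—$\bar f$ is $p$-power torsion and $\bar f^{p^n}=0$—the conclusion $\bar f=0$ simply does not follow.

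The paper's proof is also more direct than your reduction to $\Det$-groups and avoids your digression on torsionfreeness of $Q(H)$ entirely. It observes that the maps $\KTh_1(\talg{\Int_p[G]})\to\KTh_1(\Int_p[G][[T]])$, as $G$ ranges over finite groups, constitute a morphism of Green modules over the Green ring $G\mapsto\operatorname{G}_0(\Int_p[G])$; then \cite[Thm.~4.3]{CPT:K1ofPadicGroupRingII}, applied directly to the image subgroups, yields $f^{\ell}$ in the image for some $\ell$ prime to $p$, and one finishes by combining with hypothesis~(2). Your attempt to establish that $Q(H)$ is torsionfree for elementary $H$ is not needed—hypothesis~(1) already gives $\Res_H^G\bar f=0$ outright—and the filtration argument you sketch would in any case require more than Lemma~\ref{lem:p elementary case}(3), which treats only the kernel $A$ of abelianisation rather than all of $1+T\Int_p[H][[T]]$. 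The obstacle you anticipate in your last paragraph is thus a self-imposed one; the Green-module formulation sidesteps it completely.
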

\begin{proof}
The homomorphisms $\KTh_1(\talg{\Int_p[G]})\mto\KTh_1(\Int_p[G][[T]])$ for each finite group $G$ constitute a homomorphism of Green modules over the Green ring $G\mapsto \operatorname{G}_0(\Int_p[G])$ \cite[\S~4.c]{CPT:K1ofPadicGroupRingII}. Hence, \cite[Thm.~4.3]{CPT:K1ofPadicGroupRingII} implies that $f^\ell$ is in the image of $\KTh_1(\talg{\Int_p[G]})\mto \KTh_1(\Int_p[G][[T]])$ for some integer $\ell$ prime to $p$. Because of $(2)$ we may choose $\ell=1$.
\end{proof}

Finally, we need the following vanishing result for $\SK_1$, which is a variant of \cite[Prop.~2.3.7]{FK:CNCIT}.

\begin{prop}\label{prop:vanishing of SK}
Let $K/\Rat_p$ be unramified and $R=\talg{\ValR_K}$ or $R=\talg{\ValR_K[[T']]}$ for some indeterminate $T'$. (More generally, $R$ can be any ring satisfying the standing hypothesis of \cite{CPT:K1ofPadicGroupRingII}.) Let further
$\mathcal{G}$ be a profinite group with cohomological $p$-dimension $\cd_p \mathcal{G}\leq 1$. For any open normal subgroup $U\subset \mathcal{G}$ there exists an open subgroup $V\subset U$ normal in $\mathcal{G}$ such that the natural homomorphism
$$
\SK_1(R[\mathcal{G}/V])\mto \SK_1(R[\mathcal{G}/U])
$$
is the zero map.
\end{prop}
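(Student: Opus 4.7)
The plan is to follow the strategy of \cite[Prop.~2.3.7]{FK:CNCIT}, replacing their invocation of Oliver's classical results with the generalization to the class of rings $R$ of interest provided by \cite{CPT:K1ofPadicGroupRingI},\cite{CPT:K1ofPadicGroupRingII}. The argument will be reduced to the vanishing of the inverse limit
$$
\varprojlim_V \SK_1(R[\mathcal{G}/V])=0
$$
taken over open normal subgroups $V\subset U$ of $\mathcal{G}$, and then concluded by a standard Mittag--Leffler argument on finite groups.

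For the reduction, I would use that each $\SK_1(R[\mathcal{G}/V])$ is a finite $p$-group, by the finiteness results of Chinburg, Pappas, and Taylor that generalize Oliver's theorem to the rings under consideration. The images $Y_V\subset \SK_1(R[\mathcal{G}/U])$ of the natural maps $\SK_1(R[\mathcal{G}/V])\mto\SK_1(R[\mathcal{G}/U])$ then form a decreasing chain of subgroups of a finite group; it therefore stabilizes, and its intersection coincides by Mittag--Leffler with the image of $\varprojlim_V\SK_1(R[\mathcal{G}/V])$ in $\SK_1(R[\mathcal{G}/U])$. Granting the vanishing of this inverse limit, some $Y_V$ is already zero, which is precisely the conclusion sought.

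To establish the vanishing, I would invoke the description of $\SK_1(R[G])$ for finite $G$ provided by the integral $p$-adic logarithm of \cite[\S 3]{CPT:K1ofPadicGroupRingI} combined with Oliver's identification of $\SK_1(\Int_p[P])$ for finite $p$-groups $P$ as a functorial quotient of the Schur multiplier $H_2(P,\Int_p)$ (modulo contributions from abelian subgroups, which are irrelevant for the present vanishing argument). By functoriality in $P$ and Mittag--Leffler on the finite groups $H_2(\mathcal{G}/V,\Int_p)$, one obtains a surjection
$$
H_2^{\cont}(\mathcal{G},\Int_p)=\varprojlim_V H_2(\mathcal{G}/V,\Int_p)\qto \varprojlim_V\SK_1(R[\mathcal{G}/V]).
$$
The hypothesis $\cd_p\mathcal{G}\leq 1$ forces $H^i(\mathcal{G},M)=0$ for every $i\geq 2$ and every discrete $p$-primary $\mathcal{G}$-module $M$; Pontryagin duality, applied to the Tate-twisted finite quotients of $\Int_p$, then yields $H_2^{\cont}(\mathcal{G},\Int_p)=0$, and hence the desired vanishing.

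The principal obstacle is the faithful transfer of Oliver's description of $\SK_1$ in terms of group homology from the case $R=\Int_p$ to the rings $R=\talg{\ValR_K}$ and $R=\talg{\ValR_K[[T']]}$ specified in the proposition. This is precisely where the \emph{standing hypothesis} of \cite{CPT:K1ofPadicGroupRingII} enters: it guarantees that the integral logarithm construction, together with the functorial quotient presentation of $\SK_1$, remain available in this broader generality. Once this technical input is granted, the cohomological-dimension argument and the Mittag--Leffler reformulation proceed formally.
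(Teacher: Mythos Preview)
Your overall strategy matches the paper's --- reduce via a Mittag--Leffler argument to the vanishing of an inverse limit, control $\SK_1$ by a surjection from group homology, and kill the homology using $\cd_p\mathcal{G}\leq 1$. But there is a genuine gap in how you set up the homological bound.

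You invoke Oliver's description of $\SK_1(\Int_p[P])$ as a quotient of $\HF_2(P,\Int_p)$ \emph{for finite $p$-groups $P$}, and then apply it with $P=\mathcal{G}/V$. However, $\mathcal{G}$ is an arbitrary profinite group with $\cd_p\mathcal{G}\leq 1$ (in the applications it is the absolute Galois group of a function field of characteristic $p$), so the finite quotients $\mathcal{G}/V$ are \emph{not} $p$-groups in general, and the $p$-group formula does not apply to them. The correct input, which is exactly what the paper uses, is \cite[Thm.~1.7]{CPT:K1ofPadicGroupRingII}: for \emph{every} finite group $G$ there is a natural surjection
\[
R\tensor_{\Int_p} \HF_2(G,\Int_p[G_r])\qto \SK_1(R[G]),
\]
where $G_r$ is the $G$-set of $p$-regular elements under conjugation. (When $G$ is a $p$-group, $G_r=\{1\}$ and this collapses to your formula.) Only with these permutation-module coefficients does the argument go through for non-$p$-groups; the Pontryagin-dual step then involves $\HF^2(\mathcal{G},\operatorname{Map}((\mathcal{G}/U)_r,\Rat_p/\Int_p))$, which still vanishes since the coefficient module is discrete and $p$-primary.

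A second, more minor point: you apply Mittag--Leffler directly to the groups $\SK_1(R[\mathcal{G}/V])$, asserting that they are finite. For $R=\talg{\ValR_K}$ or $R=\talg{\ValR_K[[T']]}$ this is not obvious --- the upper bound $R\tensor_{\Int_p}\HF_2(G,\Int_p[G_r])$ is certainly infinite for such $R$. The paper sidesteps this issue entirely by running the finiteness and Mittag--Leffler argument at the level of the groups $\HF_2(\mathcal{G}/U,\Int_p[(\mathcal{G}/U)_r])$ themselves (which \emph{are} finite), and then using naturality of the surjection: once a transition map on $\HF_2$ vanishes, so does the induced map on $\SK_1$.
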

\begin{proof}
For any finite group $G$, let $G_r$ denote the set of $p$-regular elements, i.\,e. those elements in $G$ of order prime to $p$. The group $G$ acts on $G_r$ via conjugation. Write $\Int_p[G_r]$ for the free $\Int_p$-module generated by $G_r$. By \cite[Thm.~1.7]{CPT:K1ofPadicGroupRingII} there exists a natural surjection
$$
R\tensor_{\Int_p}\HF_2(G,\Int_p[G_r])\mto \SK_1(R[G]).
$$
Since $\HF_2(G,\Int_p[G_r])$ is finite for all finite groups $G$, it suffices to show that
$$
\varprojlim_{U\subset \mathcal{G}}\HF_2(\mathcal{G}/U,\Int_p[(\mathcal{G}/U)_r])=0,
$$
where the limit extends over all open normal subgroups of $\mathcal{G}$. After taking the Pontryagin dual we deduce the latter from
$$
\varinjlim_{U\subset \mathcal{G}}\HF^2(\mathcal{G},\operatorname{Map}((\mathcal{G}/U)_r,\Rat_p/\Int_p))=0.
$$
\end{proof}

\section{\texorpdfstring{$L$}{L}-functions of perfect complexes of adic sheaves}

We will briefly recall some  notation from \cite{Witte:NoncommutativeLFunctions} (see also \cite{Witte:PhD} and \cite{Witte:MCVarFF}). Let $\FF$ be the finite field with $q=p^v$ elements and fix an algebraic closure $\algc{\FF}$ of $\FF$. For any scheme $X$ in in the category $\cat{Sch}^\sep_{\FF}$ of separated schemes of finite type over $\FF$ and any adic ring $\Lambda$
we introduced a Waldhausen category
$\cat{PDG}^{\cont}(X,\Lambda)$ of perfect complexes of adic
sheaves on $X$ \cite[Def.~4.3]{Witte:NoncommutativeLFunctions}. The objects of this category are certain inverse systems over the index set $\openideals_{\Lambda}$ such that for $I\in\openideals_{\Lambda}$ the $I$-th level is a perfect complex of \'etale sheaves of $\Lambda/I$-modules.

Let us write $\KTh_0(X,\Lambda)$ for the zeroth Waldhausen $\KTh$-group of $\cat{PDG}^{\cont}(X,\Lambda)$, i.\,e.\ the abelian group generated by quasi-isomorphism classes of objects in the category $\cat{PDG}^{\cont}(X,\Lambda)$ modulo the relations
$$
[\cmplx{\sheaf{G}}][\cmplx{\sheaf{F}}]^{-1}[\cmplx{\sheaf{H}}]^{-1}
$$
for any sequence
$$
0\mto\cmplx{\sheaf{F}}\mto\cmplx{\sheaf{G}}\mto\cmplx{\sheaf{H}}\mto 0
$$
in $\cat{PDG}^{\cont}(X,\Lambda)$ which is exact (in each level $I\in\openideals_{\Lambda}$).

For any morphism $f\colon X\mto Y$ in $\cat{Sch}^\sep_{\FF}$ we have Waldhausen exact functors
\begin{align*}
f^*\colon \cat{PDG}^{\cont}(Y,\Lambda)&\mto \cat{PDG}^{\cont}(X,\Lambda),\\
\RDer f_!\colon\cat{PDG}^{\cont}(X,\Lambda)&\mto \cat{PDG}^{\cont}(Y,\Lambda)
\end{align*}
that correspond to the usual inverse image and the direct image with proper support. As Waldhausen exact functors they induce homomorphisms
$$
f^*\colon \KTh_0(Y,\Lambda)\mto\KTh_0(X,\Lambda),\qquad \RDer f_!\colon \KTh_0(X,\Lambda)\mto\KTh_0(Y,\Lambda).
$$
If $\Lambda'$ is a second adic ring we let $\Lambda^\op$-$\cat{SP}(\Lambda')$ denote the Waldhausen category of complexes of $\Lambda'$-$\Lambda$-bimodules which are strictly perfect as complexes of $\Lambda'$-modules. For each such complex $\cmplx{P}$ we have a change-of-ring homomorphism
$$
\ringtransf_{\cmplx{P}}\colon \KTh_0(X,\Lambda)\mto\KTh_0(X,\Lambda').
$$
The compositions of these homomorphisms behave as expected. In particular, $\ringtransf_{\cmplx{P}}$ commutes with $f^*$ and $\RDer f_!$, for $f\colon X\mto Y$, $g\colon Y\mto Z$ we have $(g\comp f)^*=f^*\comp g^*$, $\RDer (f\comp g)_!=\RDer f_!\comp \RDer g_!$, and $\RDer f'_!g'^*=g^*\RDer f_!$ if
$$
\xymatrix{W\ar[r]^{f'}\ar[d]^{g'}&X\ar[d]^{g}\\
          Y\ar[r]^{f}            &Z}
$$
is a cartesian square \cite[\S~4]{Witte:NoncommutativeLFunctions}.

For any $A\in\KTh_0(X,\Lambda)$ we define the $L$-function $L(A,T)\in\KTh_1(\Lambda[[T]])$ as follows. First, assume that $X=\Spec \FF'$ for a finite field extension $\FF'/\FF$ of degree $d$, that $\Lambda$ is finite and that $A$ is the class of a locally constant flat \'etale sheaf of $\Lambda$-modules on $X$. This sheaf corresponds to a finitely generated, projective $\Lambda$-module $P$ with a continuous action of the absolute Galois group of $\FF'$, which is topologically generated by the geometric Frobenius automorphism $\Frob_{\FF'}$ of $\FF'$. We then let $L(A,T)$ be the inverse of the class of the automorphism $\id-\Frob_{\FF'}T^d$ of $\Lambda[[T]]\tensor_{\Lambda}P$ in $\KTh_1(\Lambda[[T]])$. If $A$ is the class of any perfect complex of \'etale sheaves of $\Lambda$-modules on $\Spec \FF'$, we replace it by a quasi-isomorphic, strictly perfect complex $\cmplx{\sheaf{P}}$ and define $L(A,T)$ as the alternating product
$$
L(A,T)=\prod_k L(\sheaf{P}^k,T)^{(-1)^k}.
$$
This then extends to a group homomorphism $\KTh_0(\Spec \FF',\Lambda)\mto\KTh_1(\Lambda[[T]])$. If $\Lambda$ is an arbitrary adic ring and $A\in\KTh_0(\Spec \FF',\Lambda)$, then $L(A,T)$ is given by the system $(L(\ringtransf_{A/I}(A),T))_{I\in\openideals_{\Lambda}}$ in
$$
\KTh_1(\Lambda[[T]])=\varprojlim_{I\in\openideals_{\Lambda}}\KTh_1(\Lambda/I[[T]]).
$$
Finally, if $X$ is any separated scheme over $\FF$, we let $X^0$ denote the set of closed points of $X$ and
$$
x\colon \Spec k(x)\mto X
$$
the closed immersion corresponding to any $x\in X^0$. We set
$$
L(A,T)=\prod_{x\in X^0}L(x^*(A),T)\in\KTh_1(\Lambda[[T]]).
$$
The product converges in the topology of $\KTh_1(\Lambda[[T]])$ induced by the adic topology of $\Lambda[[T]]$ because $T\in\Jac(\Lambda[[T]])$ and for any $d$ there are only finitely many closed points of degree $d$ in $X$. We have thus constructed a group homomorphism
$$
L\colon \KTh_0(X,\Lambda)\mto \KTh_1(\Lambda[[T]]),\qquad A\mapsto L(A,T),
$$
for any $X$ in $\cat{Sch}^\sep_\FF$ and any adic ring $\Lambda$.

This construction agrees with \cite[Def.~6.4]{Witte:NoncommutativeLFunctions}. If $\Lambda$ is commutative, such that $\KTh_1(\Lambda[[T]])\isomorph \Lambda[[T]]^\times$ via the determinant map, it is also seen to agree with the classical definition used in \cite{EmertonKisin}. Moreover, we note that for any pair of adic rings $\Lambda$ and $\Lambda'$ and any complex $\cmplx{P}$ of bimodules in $\Lambda^\op$-$\cat{SP}(\Lambda')$, we have
$$
L(\ringtransf_{\cmplx{P}}(A),T)=\ringtransf_{\cmplx{P[[T]]}}(L(A,T))
$$
for $A\in\KTh_1(X,\Lambda)$.

\begin{rem}\label{rem:dependency on base field}
Note that $L(A,T)$ depends on the base field $\FF$. If $\FF'\subset\FF$ is a subfield, then the $L$-function of $A$ with respect to $\FF'$ is $L(A,T^{[\FF:\FF']})$ \cite[Rem.~6.5]{Witte:NoncommutativeLFunctions}.
\end{rem}

\section{The construction of unit \texorpdfstring{$L$}{L}-functions}

In this section, we prove the following theorem.

\begin{thm}\label{thm:construction of Q}
Let $s\colon X\mto \Spec \FF$ be a separated $\FF$-scheme of finite type. There exists a unique way to associate to each adic $\Int_p$-algebra $\Lambda$ a homomorphism
$$
Q\colon\KTh_0(X,\Lambda)\mto\widehat{\KTh}_1(\talg{\Lambda}),\qquad A\mapsto Q(A,T),
$$
such that for $A\in\KTh_0(X,\Lambda)$
\begin{enumerate}
\item the image of $Q(A,T)$ in $\KTh_1(\Lambda[[T]])$ is the ratio $L(A,T)/L(\RDer s_!A,T)$,
\item if $\Lambda'$ is a second adic ring and $\cmplx{P}$ is in ${\Lambda'}^\op$-$\cat{SP}(\Lambda)$, then
$$
\ringtransf_{\cmplx{\talg{P}}}(Q(A,T))=Q(\ringtransf_{\cmplx{P}}(A),T).
$$
\end{enumerate}
\end{thm}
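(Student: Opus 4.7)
The plan is to follow the two-step strategy announced in the introduction: first reduce to the case $\Lambda=\Int_p[G]$ for a finite group $G$, then use the CPT-style induction machinery assembled in the previous section to reduce further to a commutative situation covered by Emerton--Kisin. For the first reduction, I would exploit the defining inverse limit $\widehat{\KTh}_1(\talg{\Lambda})=\varprojlim_{I\in\openideals_\Lambda} \KTh_1(\Lambda/I[T])$, so that axiom~(2) applied to the natural projections $\Lambda\mto\Lambda/I$ (viewing $\Lambda/I$ as a $\Lambda/I$-$\Lambda$-bimodule) reduces uniqueness and existence to finite $\Int_p$-algebras $R$. For such a finite $R$, axiom~(2) applied to bimodule presentations involving finite group rings transports the construction from $\Int_p[G]$ to $R$; the various auxiliary choices will be forced to be irrelevant by the uniqueness on the $\Int_p[G]$-side.

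Working over $\Int_p[G]$, the task is to show that the ratio $L(A,T)/L(\RDer s_!A,T)\in\KTh_1(\Int_p[G][[T]])$ lies in the image of $\KTh_1(\talg{\Int_p[G]})=\widehat{\KTh}_1(\talg{\Int_p[G]})$ (the equality being the CPT result recalled in the preliminaries). Here I would apply Lemma~\ref{lem:induction}, which splits the task into verifying the lift after restriction to every $p$-$\Rat_p$-elementary subgroup $H\subseteq G$ together with a $p^n$-th-power condition on $G$ itself. For a $p$-$\Rat_p$-elementary $H=\Int/s\Int\rtimes P$, the decomposition $\Int_p[H]=\prod_{m\mid s}\Int_p[m][P;t]$ combined with Lemma~\ref{lem:p elementary case} reduces, modulo the $A_m$-radical, to a matrix algebra over the commutative ring $\Int_p[m][H_m^{\ab}]^{B_m}$, on which Emerton--Kisin produces a polynomial lift of the $L$-ratio; the torsionfreeness furnished by Lemma~\ref{lem:torsionfreeness p group case} and assertion~(3) of Lemma~\ref{lem:p elementary case} then promotes this lift through the $A_m$-quotient. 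The $p^n$-th-power condition is handled via Lemma~\ref{lem:p torsion criterion}, applied after passage to a maximal $\Int_p$-order $\mathcal{M}\supset\ValR_L[G]$ in a splitting field $L$, where Emerton--Kisin over $L$ provides the required lift. Uniqueness at each stage is pinned down by axiom~(1) together with Lemma~\ref{lem:SK1 for power series rings}, which rules out $SK_1$ ambiguities coming from the $T$-adic part.

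The hardest step is unquestionably this second reduction: one must verify that the bimodule change-of-rings homomorphisms $\ringtransf_{\cmplx{P}}$, the restrictions $\Res_H^G$, the product decomposition of $\Int_p[H]$, and the abelianization all commute, up to explicit identifications, with the corresponding operations on perfect complexes of \'etale sheaves — so that the algebraic lifts produced by Emerton--Kisin correspond to genuine lifts of the sheaf-theoretic $L$-function ratio in $\KTh_1$. Once these compatibilities are secured (a careful but essentially formal exercise relying on the multiplicativity of $L$ under ring change recorded at the end of the previous section), the lemmas of the preceding sections assemble directly into the desired construction of $Q(A,T)$, and the consistency needed to pass to the inverse limit over open ideals of $\Lambda$ follows from the naturality built into each step.
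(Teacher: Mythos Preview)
Your algebraic toolkit is correct --- Lemmas~\ref{lem:SK1 for power series rings}--\ref{lem:induction} are precisely the ingredients the paper uses to show that the ratio $\alpha=L(g_!g^*\Int_p,T)/L(\RDer s_!g_!g^*\Int_p,T)$ for a single Galois cover $g\colon V\to U$ with group $G$ has a preimage $\widetilde{\alpha}\in\KTh_1(\talg{\Int_p[G]})$. But two essential pieces of the architecture are missing from your outline.

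First, you never invoke induction on $\dim X$. The passage from a general perfect complex on $X$ to the element $g_!g^*\Int_p$ for some $\Int_p[G]$ is geometric, not algebraic: one replaces $\cmplx{\sheaf{F}}$ by a strictly perfect complex, finds a dense open $j\colon U\hookrightarrow X$ on which it is locally constant, and only there realises it as $\ringtransf_{\cmplx{P}}(g_!g^*\Int_p)$ for a finite Galois cover of $U$. The closed complement $i\colon Z\hookrightarrow X$ must be handled by the inductive hypothesis, and the paper sets $Q(\cmplx{\sheaf{F}},T)=Q(i^*\cmplx{\sheaf{F}},T)\cdot\ringtransf_{\cmplx{\talg{P}}}(\widetilde{\alpha})$. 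The same induction is what drives the verification that this is independent of the choices of $U$, $V$, $G$, $\cmplx{P}$. Your phrase ``bimodule presentations involving finite group rings'' does not capture this: there is no purely ring-theoretic presentation of a finite $\Lambda$ by a group ring that would let you bypass the sheaf-theoretic step.

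Second, and more seriously, your uniqueness argument has a gap. The homomorphism $\KTh_1(\talg{\Int_p[G]})\to\KTh_1(\Int_p[G][[T]])$ is \emph{not} known to be injective, so even after you lift $\alpha$ to some $\widetilde{\alpha}$, axiom~(1) does not pin it down; Lemma~\ref{lem:SK1 for power series rings} controls $\SK_1$ of the power-series ring, not of $\talg{\Int_p[G]}$. The paper's remedy is Proposition~\ref{prop:vanishing of SK}: since the function field of $X$ has $\cd_p\leq 1$, one can enlarge the cover (and hence $G$) so that $\ringtransf_{\cmplx{\talg{P}}}\colon\KTh_1(\talg{\Int_p[G]})\to\KTh_1(\talg{\Lambda})$ factors through $\Det(\talg{\Int_p[G]}^\times)$, where the map to $\Det(\Int_p[G][[T]]^\times)$ \emph{is} injective. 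The ambiguity in $\widetilde{\alpha}$ is thus killed after applying $\ringtransf_{\cmplx{\talg{P}}}$, and this is what makes both the construction and the uniqueness proof go through. Without Proposition~\ref{prop:vanishing of SK} your scheme does not produce a well-defined $Q$.
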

\begin{proof}
If we restrict to the class of adic rings $\Lambda$ which are full matrix algebras over commutative adic $\Int_p$-algebras, then the natural homomorphism $\widehat{\KTh}_1(\talg{\Lambda})\mto\KTh_1(\Lambda[[T]])$ is injective and the existence of $Q\colon\KTh_0(X,\Lambda)\mto\widehat{\KTh}_1(\talg{\Lambda})$ follows from \cite[Cor.~1.8]{EmertonKisin} and Morita invariance.  In fact, we even know that the image of $Q$ lies in the subgroup $\KTh_1(\talg{\Lambda},\Jac(\Lambda)T\talg{\Lambda})$. (The result of Emerton and Kisin is stated only for $\FF=\FF_p$, but the result for general $\FF$ follows from Remark~\ref{rem:dependency on base field} and the simple observation that $\lambda(T)\in\Lambda[[T]]$ is in $\talg{\Lambda}$ if and only if for some $n>0$ $\lambda(T^n)\in\talg{\Lambda}$.) We note further that it suffices to prove the assertion of the theorem for the class of finite $\Int_p$-algebras. Then general case then follows by taking projective limits.

We proceed by induction on $\dim X$. So, we assume that the theorem has already been proved for all schemes in $\cat{Sch}^\sep_{\FF}$ of dimension less than $\dim X$. Note that any open subscheme $j\colon U\mto X$ with closed complement $i\colon Z\mto X$ induces a decomposition
$$
\KTh_0(U,\Lambda)\times\KTh_0(Z,\Lambda)\xrightarrow{\isomorph}\KTh_0(X,\Lambda), \qquad (A,B)\mapsto (\RDer j_!A)(\RDer i_!B)
$$
and if the theorem is true for $U$ and $Z$, then it is also true for $X$. In particular, we may reduce to the case that $X$ is an integral scheme.

If $\Lambda$ is finite then each object $\cmplx{\sheaf{F}}$ in $\cat{PDG}^{\cont}(X,\Lambda)$ is quasi-isomorphic to a strictly perfect complex $\cmplx{\sheaf{P}}$ of \'etale sheaves of $\Lambda$-modules on $X$. This means, $\sheaf{P}^n$ is flat and constructible and for $|n|$ sufficiently large, $\sheaf{P}^n=0$. In particular, we may choose $j\colon U\mto X$ open and dense such that $j^*\sheaf{P}^n$ is locally constant for each $n$. We may then find a finite connected Galois covering $g\colon V\mto U$ with Galois group $G$ and a complex $\cmplx{P}$ in $\Int_p[G]^\op$-$\cat{SP}(\Lambda)$ such that
$$
j^*\cmplx{\sheaf{P}}\isomorph\ringtransf_{\cmplx{P}}(g_!g^*\Int_p)
$$
\cite[Lemma~4.12]{Witte:NoncommutativeLFunctions}. Here, we consider $g_!g^*\Int_p$ as an object in $\cat{PDG}^{\cont}(U,\Int_p[G])$. Since the function field of $X$ is of characteristic $p$, the cohomological $p$-dimension of its absolute Galois group $1$. So, we may apply Prop.~\ref{prop:vanishing of SK} and choose $G$ (after possibly shrinking $U$) large enough such that
$$
\ringtransf_{\cmplx{\talg{P}}}\colon\KTh_0(\talg{\Int_p[G]})\mto\KTh_0(\talg{\Lambda})
$$
factors through $\Det(\talg{\Int_p[G]}^\times)$.

Let $s\colon U\mto \Spec \FF$ be the structure map. We will now show that
$$
\alpha=L(g_!g^*\Int_p,T)/L(\RDer s_!(g_!g^*\Int_p),T)\in\KTh_1(\Int_p[G][[T]])
$$
has a preimage in $\KTh_1(\talg{\Int_p[G]})$. First, we note that by construction, $\alpha$ is in the subgroup $\KTh_1(\Int_p[G][[T]],T\Int_p[G][[T]])$. Further, by \cite[Fonction $L$ mod $\ell^n$, Theorem~2.2.(b)]{SGA4h}, we know that the image of $\alpha$ in $\KTh_1(\Int_p[G]/\Jac(\Int_p[G])[[T]])$ vanishes. Hence, using Lemma~\ref{lem:SK1 for power series rings}, we may view $\alpha$ as an element of the group $\Det(1+\Jac(\Int_p[G])T\Int_p[G][[T]])$. From \cite[Cor.~1.8]{EmertonKisin} and Lemma~\ref{lem:p torsion criterion} we conclude that $\alpha^{p^n}$ is in $\Det(\talg{\Int_p[G]}^\times)$ for some $n\geq 0$. By Lemma~\ref{lem:induction} it thence suffices to show that for every $\Rat_p$-$p$-elementary subgroup $H\subset G$, the element
$$
\Res_G^H \alpha=L(\Res_G^H g_!g^*\Int_p,T)/L(\RDer s_!\Res_G^H g_!g^*\Int_p,T)\in\KTh_1(\Int_p[H][[T]])
$$
has a preimage in $\KTh_1(\talg{\Int_p[H]})$. Choosing the ideal $A\subset \Int_p[H]$ as in Lemma~\ref{lem:p elementary case} and noting that by part $(1)$ of this lemma, $\Int_p[H]/A$ is a finite product of full matrix rings over commutative adic rings, we can again apply \cite[Cor.~1.8]{EmertonKisin} to see that the image of $\Res_G^H \alpha$ in
$$
\KTh_1(\Int_p[H]/A[[T]])/\KTh_1(\talg{\Int_p[H]/A})=\Det(\Int_p[H]/A[[T]]^\times)/\Det(\talg{\Int_p[H]/A}^\times)
$$
vanishes. From the exact sequence
\begin{multline*}
\Det(1+A\Int_p[H][[T]])/\Det(1+A\talg{\Int_p[H]})\mto\Det(\Int_p[H][[T]]^\times)/\Det(\talg{\Int_p[H]}^\times)\\
\mto\Det(\Int_p[H]/A[[T]]^\times)/\Det(\talg{\Int_p[H]/A}^\times)\mto 0
\end{multline*}
and part $(3)$ of Lemma~\ref{lem:p elementary case} we conclude that $\Res_G^H \alpha$ does indeed have a preimage in $\KTh_1(\talg{\Int_p[H]})$. Thus, the element $\alpha$ has a preimage $\widetilde{\alpha}$ in $\KTh_1(\talg{\Int_p[G]})$.

We return to the complex $\cmplx{\sheaf{F}}$ and define
$$
Q(\cmplx{\sheaf{F}},T)=Q(i^*\cmplx{\sheaf{F}},T)\cdot\ringtransf_{\cmplx{\talg{P}}}(\widetilde{\alpha}).
$$
We need to check that this definition does not depend on the choices of $U$, $V$, $\cmplx{P}$, and $\widetilde{\alpha}$. For this, let $j'\colon U'\mto U$ be an open dense subscheme with closed complement $i'\colon Z'\mto U$. Then $g\colon U'\times_U V\mto U'$ is still a finite connected Galois covering with Galois group $G$. Let $h\colon V'\mto U'\times_U V$ be a finite connected Galois covering with Galois group $H$ and assume that $g'=g\comp h$ is Galois with Galois group $G'$ such that $G'/H=G$. Assume further that $\cmplx{P'}$ is a complex in $\Int_p[G']^\op$-$\cat{SP}(\Lambda)$ such that $
\ringtransf_{\cmplx{P'}}(g'_!g'^*\Int_p)$ is quasi-isomorphic to $(j'\comp j)^*\cmplx{\sheaf{F}}$. By taking the stalks of $\ringtransf_{\cmplx{P'}}(g'_!g'^*\Int_p)$ and $\ringtransf_{\cmplx{P}}(g_!g^*\Int_p)$ at any geometric point of $U'$, we see that $\cmplx{P'}\tensor_{\Int_p[G']}\Int_p[G]$ is quasi-isomorphic to $\cmplx{P}$ in $\Int_p[G]^\op$-$\cat{SP}(\Lambda)$. In particular,
$$
\ringtransf_{\cmplx{\talg{P'}}}=\ringtransf_{\cmplx{\talg{P}}}\comp\ringtransf_{\talg{\Int_p[G]}}\colon \KTh_1(\talg{\Int_p[G']})\mto\KTh_1(\talg{\Lambda}).
$$
As above, we construct a preimage $\widetilde{\alpha}'$ of $L(g'_!g'^*\Int_p,T)/L(\RDer(s\comp j')_!(g'_!g'^*\Int_p),T)$ in $\KTh_1(\talg{\Int_p[G']})$. Since
$$
\Det(\talg{\Int_p[G]}^\times)\mto\Det(\Int_p[G][[T]]^\times)
$$
is injective, we see that the image of $\widetilde{\alpha}/\ringtransf_{\talg{\Int_p[G]}}(\widetilde{\alpha}')$ in $\Det(\talg{\Int_p[G]}^\times)$ must agree with the unique preimage of $L(i'^*g_!g^*\Int_p,T)/L(\RDer (s\comp i')_! i'^*g_!g^*\Int_p,T)$, which in turn agrees with the image of $Q(i'^*g_!g^*\Int_p,T)$ by the induction hypothesis. Hence,
$$
\ringtransf_{\cmplx{\talg{P}}}(\widetilde{\alpha})=\ringtransf_{\cmplx{\talg{P}}}(Q(i'^*g_!g^*\Int_p,T)\ringtransf_{\talg{\Int_p[G]}}(\widetilde{\alpha}'))
=Q(i'^*\cmplx{\sheaf{F}},T)\ringtransf_{\cmplx{P'}}(\widetilde{\alpha}').
$$
Let $i''\colon Z''\mto X$ be the closed complement of $U'$ in $X$. By the uniqueness of $Q$ for $Z''$ we conclude
$$
Q(i''^*\cmplx{\sheaf{F}},T)=Q(i'^*\cmplx{\sheaf{F}},T)Q(i^*\cmplx{\sheaf{F}},T)
$$
and so, the above definition of $Q(\cmplx{\sheaf{F}},T)$ is independent of all choices.

Assume now that $\Lambda'$ is a second finite $\Int_p$-algebra and that $\cmplx{W}$ is in $\Lambda^\op$-$\cat{SP}(\Lambda')$. Then
\begin{align*}
\ringtransf_{\cmplx{\talg{W}}}(Q(\cmplx{\sheaf{F}},T))&=\ringtransf_{\cmplx{\talg{W}}}(Q(i^*\cmplx{\sheaf{F}},T)\ringtransf_{\cmplx{\talg{P}}}(\widetilde{\alpha}))\\
&=Q(i^*\ringtransf_{\cmplx{W}}(\cmplx{\sheaf{F}}),T)\ringtransf_{W\tensor_\Lambda \cmplx{\talg{P}}}(\widetilde{\alpha})\\
&=Q(\ringtransf_{\cmplx{W}}(\cmplx{\sheaf{F}}),T).
\end{align*}

It is immediately clear from the definition that $Q(\cmplx{\sheaf{F}},T)$ does only depend on the quasi-isomorphism class of $\cmplx{\sheaf{F}}$. Moreover, if $\Lambda$ is finite, any exact sequence in $\cat{PDG}^{\cont}(X,\Lambda)$ can be completed to a diagram
$$
\xymatrix{
0\ar[r]      &\cmplx{\sheaf{P}}_1\ar[r]\ar[d]&\cmplx{\sheaf{P}}_2\ar[r]\ar[d]&\cmplx{\sheaf{P}}_3\ar[r]\ar[d]& 0\\
0\ar[r]      &\cmplx{\sheaf{F}}_1\ar[r]      &\cmplx{\sheaf{F}}_2\ar[r]      &\cmplx{\sheaf{F}}_3\ar[r]      & 0\\
}
$$
where the top row is an exact sequence of strictly perfect complexes, the bottom row is the original exact sequence, the downward pointing arrows are quasi-isomorphisms, and the squares commute up to homotopy. 
Then one finds $j\colon U\mto X$, $i\colon Z\mto X$, $g\colon V\mto U$, $G$ as above and an exact sequence
$$
0\mto \cmplx{P}_1\mto\cmplx{P}_2\mto\cmplx{P}_3\mto 0
$$
of complexes in $\Int_p[G]^\op$-$\cat{SP}(\Lambda)$ such that for $k=1,2,3$,
$$
\ringtransf_{\cmplx{P}_k}(g_!g^*\Int_p)\isomorph j^*\cmplx{\sheaf{P}}_k.
$$
Choosing $\widetilde{\alpha}$ as above, we conclude
\begin{align*}
Q(\cmplx{\sheaf{F}}_2,T)&=Q(i^*\cmplx{\sheaf{F}}_1,T)Q(i^*\cmplx{\sheaf{F}}_3,T)\ringtransf_{\cmplx{\talg{P}_1}}(\widetilde{\alpha})\ringtransf_{\cmplx{\talg{P}_3}}(\widetilde{\alpha})\\
                        &=Q(\cmplx{\sheaf{F}}_1,T)Q(\cmplx{\sheaf{F}}_3,T).
\end{align*}
Thus, $Q$ is well-defined as homomorphism $\KTh_0(X,\Lambda)\mto\KTh_1(\talg{\Lambda})$ and satisfies properties $(1)$ and $(2)$ of the theorem.

It remains to prove uniqueness. So let $Q'$ be a second family of homomorphisms $\KTh_0(X,\Lambda)\mto\widehat{\KTh}_1(\talg{\Lambda})$ ranging over all adic rings $\Lambda$ such that $(1)$ and $(2)$ are satisfied. If $i\colon Z\mto X$ is any closed subscheme, then
$$
\KTh_0(Z,\Lambda)\mto\widehat{\KTh}_1(\talg{\Lambda}),\qquad A\mapsto Q'(i_*A,T),
$$
still satisfies $(1)$ and $(2)$. Thus, by the induction hypothesis, we must have
$$
Q'(i_*A,T)=Q(A,T).
$$
If $\Lambda$ is finite and $G$ is a finite group and $\cmplx{P}$ a complex in $\Int_p[G]^\op$-$\cat{SP}(\Lambda)$ such that $\ringtransf_{\cmplx{\talg{P}}}$ factors through $\Det(\talg{\Int_p[G]})$, then by the injectivity of
$$
\Det(\talg{\Int_p[G]})\mto\Det(\Int_p[G][[T]])
$$
and properties $(1)$ and $(2)$ we must have
$$
Q'(\ringtransf_{\cmplx{\talg{P}}}(B),T)=Q(\ringtransf_{\cmplx{\talg{P}}}(B),T)
$$
for all $B\in\KTh_0(X,\Int_p[G])$. By the construction of $Q$ we thus see that $Q=Q'$. This completes the proof of the theorem.
\end{proof}

We conclude with some ancillary observations. First, we note that $Q$ depends in the same way as the $L$-function on the choosen base field $\FF$:

\begin{prop}\label{prop:dependence on the base field}
Let $X$ be a separated scheme of finite type over $\FF$ and $\FF'\subset \FF$ be a subfield. Write $Q'\colon\KTh_0(X,\Lambda)\mto\widehat{\KTh}_1(\talg{\Lambda})$ for the homomorphism resulting from applying Thm~\ref{thm:construction of Q} to $X$ considered as $\FF'$-scheme. Then
$$
Q'(A,T)=Q(A,T^{[\FF':\FF]})
$$
for every $A\in \KTh_0(X,\Lambda)$.
\end{prop}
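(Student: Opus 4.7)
The plan is to invoke the uniqueness statement in Theorem~\ref{thm:construction of Q}. Write $d=[\FF:\FF']$ and let $t\colon\Spec\FF\mto\Spec\FF'$ denote the canonical finite map, so that the $\FF'$-structure morphism on $X$ factors as $s'=t\comp s$. Define a candidate homomorphism $\widetilde{Q}(A,T):=Q(A,T^{d})$. Since the substitution $T\mapsto T^{d}$ is a continuous $\Lambda$-algebra endomorphism of $\talg{\Lambda}$ that extends to $\Lambda[[T]]$, $\widetilde{Q}$ is a well-defined homomorphism $\KTh_0(X,\Lambda)\mto\widehat{\KTh}_1(\talg{\Lambda})$. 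Property~(2) of Theorem~\ref{thm:construction of Q} (change-of-ring compatibility) is inherited from $Q$ because the transfer maps $\ringtransf_{\cmplx{\talg{P}}}$ commute with scalar substitution in $T$. By the uniqueness part of Theorem~\ref{thm:construction of Q}, it therefore suffices to verify property~(1) for $\widetilde{Q}$ with respect to the $\FF'$-structure.

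Denote $L$-functions computed over $\FF'$ by $L'$. The image of $\widetilde{Q}(A,T)$ in $\KTh_1(\Lambda[[T]])$ is, by construction, the element obtained from $L(A,T)/L(\RDer s_!A,T)$ by substituting $T\mapsto T^{d}$, which by Remark~\ref{rem:dependency on base field} equals $L'(A,T)/L'(\RDer s_!A,T)$, where in the denominator $\RDer s_!A$ is viewed on $\Spec\FF$ regarded as an $\FF'$-scheme. Property~(1) therefore reduces to the identity
$$
L'(\RDer s'_!A,T)=L'(\RDer t_!\RDer s_!A,T)=L'(\RDer s_!A,T),
$$
which expresses the $L'$-invariance of the finite pushforward $\RDer t_!$. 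After reducing to the case that $\Lambda$ is finite (by taking projective limits over open ideals) and that $\RDer s_!A$ is represented by a strictly perfect complex of locally constant constructible sheaves on $\Spec\FF$, one is left, term by term, with comparing two local factors at the unique closed point: on $\Spec\FF$ this factor is the class of $\id-\Frob_{\FF}T^{d}$ on the stalk $V$, while on $\Spec\FF'$ it is the class of $\id-\Frob_{\FF'}T$ on the induced module $V^{\oplus d}$, with $\Frob_{\FF'}$ the cyclic shift composed with $\Frob_{\FF}$. An elementary row/column reduction in $\Mat_d(\End(V)\tensor\Lambda[[T]])$ brings $\id-\Frob_{\FF'}T$ into block-diagonal form with blocks $\id-\Frob_{\FF}T^{d}$ and $d-1$ copies of $\id$, identifying the two classes in $\KTh_1(\Lambda[[T]])$.

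With property~(1) established, the uniqueness in Theorem~\ref{thm:construction of Q} forces $Q'(A,T)=\widetilde{Q}(A,T)=Q(A,T^{d})$. The only step that is not purely formal is the final $\KTh_1$-level computation establishing the pushforward-invariance of the $L$-function along $t$; everything else follows from the functoriality of $L$ and the uniqueness built into Theorem~\ref{thm:construction of Q}.
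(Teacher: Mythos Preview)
Your proof is correct and follows essentially the same approach as the paper: verify that the two candidates agree in $\KTh_1(\Lambda[[T]])$ using Remark~\ref{rem:dependency on base field}, then appeal to the uniqueness in Theorem~\ref{thm:construction of Q}. The paper's own argument is much terser (``proceed as in the proof of the uniqueness part''), and in particular leaves implicit the identity $L'(\RDer t_!\RDer s_!A,T)=L'(\RDer s_!A,T)$ that you isolate and prove by the standard block computation for induced representations; your making this step explicit is a genuine improvement in detail, not a difference in strategy.
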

\begin{proof}
By Remark~\ref{rem:dependency on base field},  the given equality holds for the images in $\KTh_1(\Lambda[[T]])$. Now one proceeds as in the proof of the uniqueness part of Thm~\ref{thm:construction of Q} to show that it also holds in $\widehat{\KTh}_1(\talg{\Lambda})$.
\end{proof}

As in \cite{EmertonKisin}, we can also define a version of $Q$ relative to a morphism $f\colon X\mto Y$ in $\cat{Sch}^\sep_{\FF}$ by setting
$$
Q(f)\colon\KTh_0(X,\Lambda)\mto\widehat{\KTh}_1(\talg{\Lambda}),\qquad A\mapsto Q(f,A,T)=Q(A,T)/Q(\RDer f_!A,T),
$$
and extend \cite[Lemma~2.4, 2.5]{EmertonKisin} as follows.

\begin{prop}
Let $A$ be an object in $\KTh_0(X,\Lambda)$.
\begin{enumerate}
 \item If $f\colon X\mto Y$, $g\colon Y\mto Z$ are two morphisms in $\cat{Sch}^\sep_{\FF}$, then
       $$
       Q(g\comp f,A,T)=Q(g,\RDer f_!A,T)Q(f,A,T).
       $$
 \item If $f\colon X\mto Y$ is quasi-finite, then
       $$
       Q(f,A,T)=1.
       $$
\end{enumerate}
\end{prop}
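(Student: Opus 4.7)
Assertion $(1)$ is purely formal: using $\RDer(g\comp f)_! = \RDer g_!\comp \RDer f_!$ and the definition $Q(f,A,T) = Q(A,T)/Q(\RDer f_! A,T)$, the product $Q(g,\RDer f_! A,T)\cdot Q(f,A,T)$ telescopes to $Q(A,T)/Q(\RDer(g\comp f)_! A,T) = Q(g\comp f,A,T)$.

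For assertion $(2)$, my plan is to invoke the uniqueness part of Theorem~\ref{thm:construction of Q} applied to the family
$$
Q'\colon \KTh_0(X,\Lambda)\mto\widehat{\KTh}_1(\talg{\Lambda}),\qquad A\mapsto Q(\RDer f_! A,T),
$$
ranging over all adic $\Int_p$-algebras $\Lambda$, where the $Q$ on the right is the homomorphism of Theorem~\ref{thm:construction of Q} associated with $Y$. Compatibility of $Q'$ with change-of-ring transfers $\ringtransf_{\cmplx{P}}$ is inherited from the corresponding property for $Y$ together with the commutation of $\RDer f_!$ and $\ringtransf_{\cmplx{P}}$. Using $s_Y\comp f = s_X$, the image of $Q'(A)$ in $\KTh_1(\Lambda[[T]])$ equals $L(\RDer f_! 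A,T)/L(\RDer s_{X!} A,T)$, so condition $(1)$ of Theorem~\ref{thm:construction of Q} for $Q'$ reduces to the $L$-function identity
$$
L(\RDer f_! A,T) = L(A,T)\qquad\text{in }\KTh_1(\Lambda[[T]]).
$$
Granting this identity, uniqueness forces $Q' = Q$ on $X$, i.e.\ $Q(\RDer f_! A,T) = Q(A,T)$, which is exactly $Q(f,A,T) = 1$.

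The real work lies in the $L$-function identity above. Since morphisms of finite type carry closed points to closed points, the product-over-closed-points formula for $L(A,T)$ rearranges as a double product $\prod_{y\in Y^0}\prod_{x\in X^0,\,f(x)=y} L(x^* A,T)$. For the left-hand side, quasi-finiteness of $f$ combined with Zariski's main theorem permits a factorisation of $f$ as an open immersion into a finite morphism, after which proper base change yields an isomorphism
$$
y^*\RDer f_! A\isomorph\bigoplus_{x\in X^0,\,f(x)=y}\RDer f_{x!}\,x^* A,\qquad f_x\colon \Spec k(x)\mto\Spec k(y),
$$
at every closed point $y\in Y^0$. Matching the two products then reduces to the fibrewise assertion $L(\RDer f_{x!}B,T) = L(B,T)$ for a finite extension of finite fields, which is a short computation using $\Frob_{k(y)}^{[k(x):k(y)]} = \Frob_{k(x)}$ and the standard formula for the characteristic polynomial of an induced representation.

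I expect this last fibrewise $L$-function calculation to be the only step requiring genuine computation; everything else is formal bookkeeping and an application of uniqueness.
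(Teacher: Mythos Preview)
Your proof is correct and follows essentially the same route as the paper: part~(1) is the same telescoping argument, and for part~(2) the paper likewise reduces to the identity $L(\RDer f_!A,T)=L(A,T)$ via the product over closed points and then invokes the uniqueness of $Q$. The only difference is granularity: the paper writes the $L$-function identity as a three-line computation $\prod_{y}\prod_{x\in f^{-1}(y)^0}L(x^*A,T)$ without spelling out base change or the induced-representation formula, whereas you make these steps explicit.
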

\begin{proof}
Assertion $(1)$ follows immediately from the definition. For the second assertion, we note that for $f\colon X\mto Y$ quasi-finite,
\begin{align*}
L(\RDer f_!A,T)&=\prod_{y\in Y^0}L(y^*f_!A,T)\\
               &=\prod_{y\in Y^0}\prod_{x\in f^{-1}(y)^0}L(x^*A,T)\\
               &=\prod_{x\in X^0}L(x^*A,T)=L(A,T).
\end{align*}
In particular, the images of $Q(A,T)$ and $Q(\RDer f_!A,T)$ in $\KTh_1(\Lambda[[T]])$ agree. By the uniqueness of $Q$, we must have
$$
Q(A,T)=Q(\RDer f_!A,T)
$$
in $\widehat{\KTh}_1(\talg{\Lambda})$.
\end{proof}

Finally, setting
$$
\widehat{\KTh}_0(X,\Lambda)=\varprojlim_{I\in\openideals_{\Lambda}}\KTh_0(X,\Lambda/I),
$$
we observe that the constructions of the $L$-function and of $Q$ extends to
$$
L\colon\widehat{\KTh}_0(X,\Lambda)\mto\KTh_1(\Lambda[[T]]), \qquad Q\colon\widehat{\KTh}_0(X,\Lambda)\mto\widehat{\KTh}_1(\talg{\Lambda}).
$$
We cannot say much about the canonical homomorphism $\KTh_0(X,\Lambda)\mto\widehat{\KTh}_0(X,\Lambda)$, but we suspect that it might be neither injective nor surjective in general.

\section{A noncommutative main conjecture for separated schemes over \texorpdfstring{$\FF$}{F}}\label{sec:MC}

In this section, we will complete the $\ell=p$ case of the version of the noncommutative Iwasawa main conjecture for separated schemes $X$ over $\FF=\FF_q$ considered in \cite{Witte:MCVarFF}. We will briefly recall the terminology of the cited article.

Recall from \cite[Def.~2.1]{Witte:MCVarFF} that a principal covering $(f\colon Y\mto X,G)$ of $X$ with $G$ a profinite group is an inverse system $(f_U\colon Y_U\mto X)_{U\in\cat{NS}(G)}$ of finite principal $G/U$-coverings (not necessarily connected), where $U$ runs through the lattice $\cat{NS}(G)$ of open normal subgroups of $G$. As a particular case, for $k$ prime to $p$ and $\varGamma_{kp^\infty}=\Gal(\FF_q^{kp^{\infty}}/\FF_q)$, we have the cyclotomic $\varGamma_{kp^\infty}$-covering
$$
(X_{kp^\infty}=X\times_{\Spec \FF_q}\Spec \FF_{q^{kp^\infty}}\mto X,\varGamma_{kp^\infty})
$$
\cite[Def.~2.5]{Witte:MCVarFF}. We will only consider principal coverings $(f\colon Y\mto X,G)$  such that there exists a closed normal subgroub $H\subset G$ which is a topologically finitely generated virtual pro-$p$-group and such that the quotient covering $(f_H\colon Y_H\mto X,G/H)$ is the cyclotomic $\varGamma_{p^\infty}$-covering. These coverings will be called admissible coverings for short \cite[Def.~2.6]{Witte:MCVarFF}. For such a group $G=H\rtimes \varGamma_{p^\infty}$, if $\Lambda$ is any adic $\Int_p$-algebra, then the profinite group ring $\Lambda[[G]]$ is again an adic $\Int_p$-algebra \cite[Prop.~3.2]{Witte:MCVarFF}.

For any admissible covering $(f\colon Y\mto X,G)$  we constructed in \cite[Prop.~6.2]{Witte:MCVarFF} a Waldhausen exact functor
$$
f_!f^*\colon\cat{PDG}^\cont(X,\Lambda)\mto\cat{PDG}^\cont(X,\Lambda[[G]])
$$
by forming the inverse system over the intermediate finite coverings. As before, we will denote the induced homomorphism
$$
f_!f^*\colon\KTh_0(X,\Lambda)\mto \KTh_0(X,\Lambda[[G]])
$$
by the same symbol.

We also constructed a localisation sequence
$$
1\mto\KTh_1(\Lambda[[G]])\mto\KTh_1^{\loc}(H,\Lambda[[G]])\xrightarrow{d} \KTh_0^{\rel}(H,\Lambda[[G]])\mto 1
$$
with
\begin{align*}
\KTh_1^{\loc}(H,\Lambda[[G]])&=\KTh_1(w_H\cat{PDG}^{\cont}(\Lambda[[G]])),\\
 \KTh_0^{\rel}(H,\Lambda[[G]])&=\KTh_0(\cat{PDG}^{\cont,w_H}(\Lambda[[G]]))
\end{align*}
and certain Waldhausen categories $w_H\cat{PDG}^\cont(\Lambda[[G]])$ and $\cat{PDG}^{\cont,w_H}(\Lambda[[G]])$, respectively  \cite[\S~4]{Witte:MCVarFF}, \cite[Cor.~3.3]{Witte:Splitting}. Whenever $\Lambda[[H]]$ is noetherian, there exists a left denominator set $S$ such that
\begin{align*}
\KTh_1^{\loc}(H,\Lambda[[G]])&=\KTh_1(\Lambda[[G]]_S),\\
\KTh_0^{\rel}(H,\Lambda[[G]])&=\KTh_0(\Lambda[[G]],\Lambda[[G]]_S),
\end{align*}
where $\Lambda[[G]]_S$ is the localisation of $\Lambda[[G]]$ at $S$ and the $\KTh$-groups on the right-hand side are the usual groups appearing in the corresponding localisation sequence of higher $\KTh$-theory \cite[Thm.~2.18, Prop.~2.20, Rem~2.22]{Witte:Splitting}. In particular, if $\Lambda$ is commutative (hence, noetherian) and $G=\varGamma_{kp^\infty}$, then the Ore set $S$ is given by
$$
S=\set{\lambda\in\Lambda[[\varGamma_{kp^\infty}]]\colon\text{$\lambda$ is a nonzerodivisor in $\Lambda/\Jac(\Lambda)[[\varGamma_{kp^\infty}]]$}}
$$
and
$$
\KTh_1^{\loc}(H,\Lambda[[G]])=\KTh_1(\Lambda[[G]]_S)=\Lambda[[\varGamma_{kp^\infty}]]_S^\times.
$$

Let $\Lambda'$ be a second adic ring and $\cmplx{M}$ a complex in $\Lambda[[G]]^\op\mathrm{-}\cat{SP}(\Lambda')$. Then
$$
\cmplx{\widetilde{M}}=\ringtransf_{\cmplx{M}}(f_!f^*\Lambda)
$$
is a perfect complex of smooth $\Lambda'$-adic sheaves on $X$ with an additional right $\Lambda$-module structure. Using this complex we obtain a homomorphism
$$
\ringtransf_{\cmplx{\widetilde{M}}}\colon \KTh_0(X,\Lambda)\mto\KTh_0(X,\Lambda').
$$
Furthermore, we can form the complex
$$
\cmplx{M[[G]]^\delta}=\Lambda'[[G]]\tensor_{\Lambda}\cmplx{M}
$$
in $\Lambda[[G]]^\op$-$\cat{SP}(\Lambda[[G]])$ with the canonical left $G$-action and the diagonal right $G$-action.

If $G$ acts trivially on $\cmplx{M}$, then $\ringtransf_{\cmplx{\widetilde{M}}}$ is just the homomorphism
$$
\ringtransf_{\cmplx{M}}\colon \KTh_0(X,\Lambda)\mto\KTh_0(X,\Lambda')
$$
that we have already used above. Moreover, we have
$$
\ringtransf_{\cmplx{M[[G]]^\delta}}f_!f^*A=f_!f^*\ringtransf_{\cmplx{\widetilde{M}}}(A)
$$
\cite[Prop.~6.7]{Witte:MCVarFF}. Note that $\cmplx{M[[G]]^\delta}$ also induces compatible homomorphisms on $\KTh_1(\Lambda[[G]])$, $\KTh_1^\loc(H,\Lambda[[G]])$ and $\KTh_0^\rel(H,\Lambda[[G]])$ \cite[Prop.~4.6]{Witte:MCVarFF}.

As a special case we may take $\Lambda=\Lambda'=\Int_p$ and let $\rho\colon G\mto \operatorname{GL}_n(\Int_p)$  be a continuous left $G$-representation as in the introduction. We may then choose $M$  to be the $\Int_p$-$\Int_p[[G]]$-module obtained from $\rho$ by letting $G$ act contragrediently on $\Int_p^n$ from the right. The sheaf $\widetilde{M}$ is then just the smooth $\Lambda$-adic sheaf $\sheaf{M}(\rho)$ associated to $\rho$ and $\ringtransf_{\widetilde{M}}$ corresponds to taking the (completed) tensor product with this sheaf over $\Int_p$.

In \cite[Thm.~8.1]{Witte:MCVarFF} we have already shown that for each complex $\cmplx{\sheaf{F}}$ in the Waldhausen category $\cat{PDG}^{\cont}(X,\Lambda)$ and each admissible covering $(f\colon Y\mto X,G)$ the complex of $\Lambda$-adic cohomology with proper support
$$
\RDer \Sectc(X,f_!f^*\cmplx{\sheaf{F}})
$$
is an object of $\cat{PDG}^{\cont,w_H}(\Lambda[[G]])$. Hence, we obtain a homomorphism
$$
\KTh_0(X,\Lambda)\mto\KTh_0^{\rel}(H,\Lambda[[G]]),\qquad A\mapsto \RDer\Sectc(X,f_!f^*A).
$$
We have also contructed an explicit homomorphism
$$
\KTh_0(X,\Lambda)\mto\KTh_1^{\loc}(H,\Lambda[[G]]),\qquad A\mapsto \ncL_G(X/\FF,A),
$$
such that
$$
d \ncL_G(X/\FF,A)=\RDer\Sect_c(X,f_!f^*A)^{-1}
$$
\cite[Def.~8.3]{Witte:MCVarFF}.

We let $\gamma$ denote the image of the geometric Frobenius automorphism $\Frob_\FF$ in $\varGamma_{kp^\infty}$. If $\Omega$ is a commutative adic $\Int_p$-algebra, we write $\widetilde{S}\subset\talg{\Omega}$ for the denominator set consisting of those elements which become a unit in $\Omega[[T]]$. The proof of \cite[Lemma~8.5]{Witte:MCVarFF} shows that the evaluation $T\mapsto\gamma^{-1}$ extends to a ring homomorphism
$$
\talg{\Omega}_{\widetilde{S}}\mto \Omega[[\varGamma_{kp^\infty}]]_S,\qquad \omega(T)\mapsto \omega(\gamma^{-1}).
$$
Note that $\talg{\Omega}_{\widetilde{S}}$ is a semilocal ring, hence
$$
\talg{\Omega}_{\widetilde{S}}^\times=\KTh_1(\talg{\Omega}_{\widetilde{S}}).
$$

Let $s\colon X\mto \Spec \FF$ be the structure map. Then the proof of \cite[Thm.~8.6]{Witte:MCVarFF} shows that
for every $\cmplx{M}$ in $\Lambda[[G]]^\op$-$\cat{SP}(\Omega)$ and every $A\in\KTh_0(X,\Lambda)$, we have
$$
L\left(\RDer s_!\ringtransf_{\cmplx{\widetilde{M}}}(f_!f^*A),T\right)\in\KTh_1(\talg{\Omega}_{\widetilde{S}})
$$
and that
$$
\ringtransf_{\Omega[[\varGamma_{kp^\infty}]]}\ringtransf_{\cmplx{M[[G]]^\delta}}(\ncL_G(X/\FF,A))=L\left(\RDer s_!\ringtransf_{\cmplx{\widetilde{M}}}(f_!f^*A),\gamma^{-1}\right)
$$
in
$$
\KTh_1^{\loc}(H,\Omega[[\varGamma_{kp^\infty}]])=\Omega[[\varGamma_{kp^\infty}]]^\times_S.
$$

So, $\ncL_G(X/\FF,A)$ satisfies the desired interpolation property with respect to the functions $L\left(\RDer s_!\ringtransf_{\cmplx{\widetilde{M}}}(f_!f^*A),\gamma^{-1}\right)$, but not with respect to $L(\ringtransf_{\cmplx{\widetilde{M}}}(f_!f^*A),\gamma^{-1})$. We will construct a modification of $\ncL_G(X/\FF,A)$ below.

For any adic ring $\Lambda$, the evaluation map $T\mapsto 1$ induces a homomorphism
$$
\widehat{\KTh}_1(\talg{\Lambda})\mto \KTh_1(\Lambda),\qquad\lambda(T)\mapsto \lambda(1).
$$
We also obtain an evaluation map
$$
\widehat{\KTh}_1(\talg{\Lambda})\mto \KTh_1(\Lambda[[\varGamma_{kp^\infty}]]),\qquad \lambda(T)\mapsto \lambda(\gamma^{-1}),
$$
as composition of $T\mapsto 1$ with the automorphism of $\widehat{\KTh}_1(\talg{\Lambda[[\varGamma_{kp^\infty}]]})$ induced by $T\mapsto \gamma^{-1}T$ and the injection
$$
\ringtransf_{\talg{\Lambda[[\varGamma_{kp^\infty}]]}}\colon \widehat{\KTh}_1(\talg{\Lambda})\mto\widehat{\KTh}_1(\talg{\Lambda[[\varGamma_{kp^\infty}]]}).
$$

\begin{defn}
For any admissible covering $(f\colon Y\mto X,G)$ and any $A\in\KTh_0(X,\Lambda)$ we set
$$
\widetilde{\ncL}_G(X/\FF,A)=\ncL_G(X/\FF,A)Q(f_!f^*A,1).
$$
\end{defn}

Since $Q(f_!f^*A,1)\in\KTh_1(\Lambda[[G]])$, we still have

\begin{thm}
$$
d \widetilde{\ncL}_G(X/\FF,A)=\RDer\Sect_c(X,f_!f^*A)^{-1}
$$
in $\KTh_0^{\loc}(H,\Lambda[[G]])$.
\end{thm}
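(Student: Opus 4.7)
The plan is to reduce the statement to the already established identity
$$
d\,\ncL_G(X/\FF,A)=\RDer\Sectc(X,f_!f^*A)^{-1}
$$
from \cite[Def.~8.3]{Witte:MCVarFF}, exploiting multiplicativity of $d$ and the exactness of the localisation sequence
$$
1\mto\KTh_1(\Lambda[[G]])\mto\KTh_1^{\loc}(H,\Lambda[[G]])\xrightarrow{d}\KTh_0^{\rel}(H,\Lambda[[G]])\mto 1.
$$
Since $\widetilde{\ncL}_G(X/\FF,A)$ is defined as the product $\ncL_G(X/\FF,A)\cdot Q(f_!f^*A,1)$ in $\KTh_1^{\loc}(H,\Lambda[[G]])$ and $d$ is a homomorphism of abelian groups, it suffices to show that the correction factor $Q(f_!f^*A,1)$ lies in the kernel of $d$, i.e.\ in the image of $\KTh_1(\Lambda[[G]])\hookrightarrow\KTh_1^{\loc}(H,\Lambda[[G]])$.

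For the latter, I would recall that, by construction, $Q(f_!f^*A,T)$ is an element of $\widehat{\KTh}_1(\talg{\Lambda[[G]]})$, and the evaluation map $T\mapsto 1$ defined just above the theorem is a homomorphism $\widehat{\KTh}_1(\talg{\Lambda[[G]]})\mto\KTh_1(\Lambda[[G]])$. Thus $Q(f_!f^*A,1)$ automatically lies in $\KTh_1(\Lambda[[G]])$, and its image in $\KTh_1^{\loc}(H,\Lambda[[G]])$ lies in $\ker d$ by exactness of the localisation sequence.

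Putting these pieces together:
\begin{align*}
d\,\widetilde{\ncL}_G(X/\FF,A)&=d\bigl(\ncL_G(X/\FF,A)\bigr)\cdot d\bigl(Q(f_!f^*A,1)\bigr)\\
&=\RDer\Sectc(X,f_!f^*A)^{-1}\cdot 1\\
&=\RDer\Sectc(X,f_!f^*A)^{-1},
\end{align*}
which is the required identity in $\KTh_0^{\rel}(H,\Lambda[[G]])$. There is no real obstacle here: the entire content has been prepared in advance, namely the identification of $Q(\cdot,1)$ as an element of $\KTh_1(\Lambda[[G]])$ via the evaluation map, and the already established boundary formula for $\ncL_G$. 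The theorem is essentially a bookkeeping consequence of the definition of $\widetilde{\ncL}_G$.
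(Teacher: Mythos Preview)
Your proposal is correct and matches the paper's own reasoning: the paper simply remarks, immediately before stating the theorem, that $Q(f_!f^*A,1)\in\KTh_1(\Lambda[[G]])$ and therefore the boundary identity for $\ncL_G$ persists for $\widetilde{\ncL}_G$. You have spelled out exactly this argument in detail.
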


We will now investigate the transformation properties of $\widetilde{\ncL}_G(X/\FF,A)$.

\begin{thm}\label{thm:transformation properties}
Consider a separated scheme $X$ of finite type over a finite field $\FF$.
Let $\Lambda$ be any adic $\Int_p$ algebra and let $A$ be in $\KTh_0(X,\Lambda)$.
\begin{enumerate}
\item Let $\Lambda'$ be another adic $\Int_{p}$-algebra.
For any complex $\cmplx{M}$ in $\Lambda[[G]]^{\op}$-$\cat{SP}(\Lambda')$, we have
$$
\ringtransf_{\cmplx{M[[G]]^{\delta}}}(\widetilde{\ncL}_G(X/\FF,A))=\widetilde{\ncL}_G(X/\FF,\ringtransf_{\cmplx{\widetilde{M}}}(A))
$$
in $\KTh_1^\loc(H,\Lambda'[[G]])$.

\item Let $H'$ be a closed virtual pro-$p$-subgroup of $H$ which is normal in $G$. Then
$$
\ringtransf_{\Lambda[[G/H']]}(\widetilde{\ncL}_G(X/\FF,A))=\widetilde{\ncL}_{G/H'}(X/\FF,A)
$$
in $\KTh_1^\loc(H',\Lambda[[G/H']])$.

\item Let $U$ be an open subgroup of $G$ and let $\FF'$ be the finite extension corresponding to the image of $U$ in $\varGamma_{p^{\infty}}$. Then
$$
\ringtransf_{\Lambda[[G]]}\big(\widetilde{\ncL}_{G}(X/\FF,A)\big)=\widetilde{\ncL}_{U}(Y_U/\FF',f_U^*A)
$$
in $\KTh_1(H\cap U,(\Lambda[[U]]))$.
\end{enumerate}
\end{thm}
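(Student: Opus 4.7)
The plan is to use the factorisation
$$
\widetilde{\ncL}_G(X/\FF, A) = \ncL_G(X/\FF, A)\cdot Q(f_!f^*A, 1)
$$
and to establish each of (1)--(3) by combining the corresponding (already known) transformation law for $\ncL_G$ from \cite{Witte:MCVarFF} with the analogous identity for the $Q$-factor $Q(f_!f^*A, 1) \in \KTh_1(\Lambda[[G]])$. In every case the latter will reduce to property (2) of Theorem~\ref{thm:construction of Q}, together with the observation that the evaluation map $\widehat{\KTh}_1(\talg{-})\mto\KTh_1(-)$, $T\mapsto 1$, commutes with $\ringtransf$.

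For (1), Theorem~\ref{thm:construction of Q}(2) applied to $f_!f^*A$ and the bimodule complex $\cmplx{M[[G]]^\delta}$ yields
$$
\ringtransf_{\cmplx{\talg{M[[G]]^\delta}}}\bigl(Q(f_!f^*A, T)\bigr) = Q\bigl(\ringtransf_{\cmplx{M[[G]]^\delta}}(f_!f^*A), T\bigr),
$$
and by \cite[Prop.~6.7]{Witte:MCVarFF} the right-hand side equals $Q(f_!f^*\ringtransf_{\cmplx{\widetilde{M}}}(A), T)$; specialising at $T=1$ gives the desired equality. Part (2) is the special case $\Lambda'=\Lambda$, $\cmplx{M}=\Lambda[[G/H']]$ of (1), once one identifies $\ringtransf_{\cmplx{M[[G]]^\delta}}$ with $\ringtransf_{\Lambda[[G/H']]}$ and $f_!f^*\ringtransf_{\cmplx{\widetilde{M}}}(A)$ with $(f_{H'})_!f_{H'}^*A$.

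For (3) the new ingredient is the change of base field. Applying Theorem~\ref{thm:construction of Q}(2) with the restriction bimodule $\Lambda[[G]]$ in $\Lambda[[G]]^\op\mathrm{-}\cat{SP}(\Lambda[[U]])$ yields
$$
\ringtransf_{\talg{\Lambda[[G]]}}\bigl(Q_\FF(f_!f^*A, T)\bigr) = Q_\FF\bigl(\ringtransf_{\Lambda[[G]]}(f_!f^*A), T\bigr).
$$
A direct check on the inverse system of intermediate finite coverings identifies $\ringtransf_{\Lambda[[G]]}(f_!f^*A)$ with $(f_U)_!\,B$, where $B=(f|_U)_!(f|_U)^*f_U^*A \in \KTh_0(Y_U, \Lambda[[U]])$ is formed using the admissible $U$-covering $f|_U\colon Y\mto Y_U$. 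Since $f_U\colon Y_U\mto X$ is finite étale, and hence quasi-finite, the vanishing part of the proposition preceding the theorem gives $Q_\FF((f_U)_!B, T) = Q_\FF(B, T)$. Regarding $Y_U$ as an $\FF'$-scheme and applying Proposition~\ref{prop:dependence on the base field} with the subfield $\FF\subset\FF'$, we obtain $Q_\FF(B, T) = Q_{\FF'}(B, T^{[\FF':\FF]})$, so at $T=1$ both base-field conventions agree and we recover the $Q$-factor of $\widetilde{\ncL}_U(Y_U/\FF', f_U^*A)$.

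The main technical obstacle is the identification $\ringtransf_{\Lambda[[G]]}(f_!f^*A)=(f_U)_!B$, which must be verified levelwise for the inverse systems defining the two covering functors, with the Galois actions matched carefully; once this is established, the remaining steps reduce to routine manipulations with evaluation maps and invocations of the already-established transformation laws for $\ncL_G$ and $Q$.
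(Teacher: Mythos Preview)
Your overall strategy is exactly that of the paper: split $\widetilde{\ncL}_G=\ncL_G\cdot Q(f_!f^*A,1)$, invoke \cite[Thm.~8.4]{Witte:MCVarFF} for the $\ncL_G$-factor, and handle the $Q$-factor via Theorem~\ref{thm:construction of Q}(2) combined with the identities for $\ringtransf$ acting on $f_!f^*A$. Your treatment of (1) matches the paper, and for (3) you actually spell out a step the paper leaves implicit, namely the use of the quasi-finite vanishing $Q((f_U)_!B,T)=Q(B,T)$ to pass from $X$ to $Y_U$ before changing the base field.

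There is, however, a slip in your handling of (2). It is \emph{not} the special case $\Lambda'=\Lambda$, $\cmplx{M}=\Lambda[[G/H']]$ of (1): in (1) the group $G$ is fixed and only the coefficient ring changes, whereas in (2) the covering itself changes from $(f,G)$ to $(f_{H'},G/H')$. Moreover $\Lambda[[G/H']]$ is not strictly perfect over $\Lambda$ when $G/H'$ is infinite, so it does not even lie in $\Lambda[[G]]^{\op}$-$\cat{SP}(\Lambda)$. The fix is immediate and follows the same template: apply Theorem~\ref{thm:construction of Q}(2) directly to the $\Lambda[[G/H']]$-$\Lambda[[G]]$-bimodule $\Lambda[[G/H']]$ and use the identity $\ringtransf_{\Lambda[[G/H']]}f_!f^*A=(f_{H'})_!f_{H'}^*A$ from \cite[Prop.~6.5]{Witte:MCVarFF}, which is what the paper does.

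Finally, the ``main technical obstacle'' you flag in (3), the identification $\ringtransf_{\Lambda[[G]]}(f_!f^*A)=\RDer(f_U)_!\,(f^U)_!(f^U)^*f_U^*A$, is not something you need to verify from scratch: it is precisely the content of \cite[Prop.~6.5]{Witte:MCVarFF}, which the paper simply cites.
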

\begin{proof}
In \cite[Thm.~8.4]{Witte:MCVarFF} we have already proved that $\ncL_G(X/\FF,A)$ satisfies the given transformation properties. To prove the same properties for $Q(f_*f^*A,1)$, one uses the general transformation rule for $Q(A,T)$ and $\ringtransf$ and the equalities
\begin{align*}
\ringtransf_{\cmplx{M[[G]]^{\delta}}}f_!f^*A&=f_!f^*\ringtransf_{\cmplx{\widetilde{M}}}(A)\\
\ringtransf_{\Lambda[[G/H']]}f_!f^*A&={f_{H'}}_!f_{H'}^*A\\
\ringtransf_{\Lambda[[G]]}f_!f^*A&=\RDer {f_U}_!f^U_!{f^U}^* f_U^*A
\end{align*}
with $(f^U\colon Y\mto Y_U, U)$ the restriction of the covering to $U$
\cite[Prop.~6.5, 6.7]{Witte:MCVarFF}.  For $(3)$ it remains to notice that the evaluation $Q(A,1)$ does not depend on the base field $\FF$ by Prop.~\ref{prop:dependence on the base field}.
\end{proof}

\begin{prop}\label{prop:cyclotomic case}
Consider the admissible covering $(f\colon X_{kp^\infty}\mto X, \varGamma_{kp^\infty})$. For any adic ring $\Lambda$ and any $A\in\KTh_1(X,\Lambda)$,
$$
Q(f_!f^*A,T)=Q(\ringtransf_{\Lambda[[\varGamma_{kp^\infty}]]}(A),\gamma^{-1}T)
$$
in $\widehat{\KTh}_1(\talg{\Lambda[[\varGamma_{kp^\infty}]]})$.
\end{prop}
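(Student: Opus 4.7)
The plan is to imitate the strategy used in the proof of Theorem~\ref{thm:construction of Q}: first verify the identity at the level of $\KTh_1(\Lambda[[\varGamma_{kp^\infty}]][[T]])$ by an explicit comparison of local $L$-factors, and then lift it to $\widehat{\KTh}_1(\talg{\Lambda[[\varGamma_{kp^\infty}]]})$ by the same inductive and Galois-theoretic reductions.

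First I would check that both sides have the same image in $\KTh_1(\Lambda[[\varGamma_{kp^\infty}]][[T]])$. By property $(1)$ of Theorem~\ref{thm:construction of Q} this reduces to the $L$-function identities
$$
L(f_!f^*A,T)=L(\ringtransf_{\Lambda[[\varGamma_{kp^\infty}]]}(A),\gamma^{-1}T)
$$
and the analogous one for $\RDer s_!A$. After reducing to finite $\Lambda$, both become pointwise assertions: at a closed point $x\in X^0$ of residue degree $d$, the contragredient convention recalled in the introduction means $\Frob_x$ acts on the stalk $x^*f_!f^*A$ as $\gamma^{-d}\Frob_x$, giving local factor $\det(\id-\gamma^{-d}\Frob_xT^d)^{-1}$; on the other side, substituting $T\mapsto\gamma^{-1}T$ in $\det(\id-\Frob_xT^d)^{-1}$ yields the same expression, since $\gamma$ is central. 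The identity for $\RDer s_!$ follows from the same calculation applied to $\Spec\FF$ together with proper base change for the cartesian square formed by $s$ and the cyclotomic cover.

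Next I would use $\widehat{\KTh}_1(\talg{\Lambda[[\varGamma_{kp^\infty}]]})=\varprojlim_{n,I}\KTh_1((\Lambda/I)[\varGamma_{kp^n}][T])$ together with compatibility of both sides under the obvious transition maps to reduce to finite $\Lambda$ and to a single finite level $f_n\colon X_{kp^n}\to X$. Following the inductive proof of Theorem~\ref{thm:construction of Q}, I would then induct on $\dim X$ using the open--closed decomposition (with the closed stratum handled by induction) to reduce further to the case $\cmplx{\sheaf{F}}\simeq\ringtransf_{\cmplx{P}}(g_!g^*\Int_p)$ for a sufficiently large finite Galois covering $g\colon V\to U$ with group $\mathcal{G}$, chosen via Proposition~\ref{prop:vanishing of SK} so that the relevant change-of-ring maps factor through $\Det(\talg{\Int_p[\mathcal{G}\times\varGamma_{kp^n}]}^\times)$. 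Since $\mathcal{G}\times\varGamma_{kp^n}$ is finite, the injection $\Det(\talg{\Int_p[\mathcal{G}\times\varGamma_{kp^n}]}^\times)\hookrightarrow\Det(\Int_p[\mathcal{G}\times\varGamma_{kp^n}][[T]]^\times)$ recorded in the beginning of Section~2, combined with the first paragraph, closes the argument at each finite level.

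The main obstacle I anticipate is the mismatch between the two transformation rules that the two sides naturally satisfy: the LHS uses the diagonal-action bimodule $\cmplx{P[\varGamma_{kp^n}]^\delta}$ (via $f_!f^*\ringtransf_{\cmplx{P}}=\ringtransf_{\cmplx{P[\varGamma_{kp^n}]^\delta}}f_!f^*$), whereas the RHS uses $\cmplx{P[\varGamma_{kp^n}]}$ with the trivial $\varGamma_{kp^n}$-action. I would handle this by first proving the identity for the universal input $A=g_!g^*\Int_p\in\KTh_0(U,\Int_p[\mathcal{G}])$, where both sides refer to the same base class in $\widehat{\KTh}_1(\talg{\Int_p[\mathcal{G}][[\varGamma_{kp^n}]]})$, and then pushing through the respective $\ringtransf$; the $\Det$-injectivity makes the resulting asymmetry harmless, as the images in $\Det(\Int_p[\mathcal{G}\times\varGamma_{kp^n}][[T]]^\times)$ are forced to agree by the $L$-function calculation of the first paragraph.
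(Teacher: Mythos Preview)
Your approach is essentially the paper's, and it is correct. Two points where you diverge are worth noting.

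First, the reduction to finite cyclotomic levels $\varGamma_{kp^n}$ is unnecessary. The paper stays at the full level $\varGamma_{kp^\infty}$ throughout and applies Proposition~\ref{prop:vanishing of SK} directly with
\[
R=\talg{\Int_p[[\varGamma_{kp^\infty}]]}\isomorph\talg{\Int_p[\Int/k\Int][[T']]},
\]
which is precisely one of the rings singled out in the parenthetical of that proposition. This lets one choose the finite Galois covering $g$ with group $G$ so that $\ringtransf_{\cmplx{\talg{P[[\varGamma_{kp^\infty}]]}}}$ already factors through $\Det(\talg{\Int_p[[G\times\varGamma_{kp^\infty}]]}^\times)$, and the $\Det$-injectivity argument goes through without ever truncating $\varGamma$. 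Your route via finite $n$ is not wrong, but it obliges you to make the covering choice uniformly in $n$ (or to redo it at each level and check compatibility), which is extra bookkeeping the paper simply sidesteps.

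Second, the ``mismatch'' you anticipate between the diagonal bimodule $\cmplx{P[\varGamma]^\delta}$ and the trivial one $\cmplx{P[\varGamma]}$ does not actually arise: $\cmplx{P}$ carries no $\varGamma_{kp^\infty}$-action to begin with, so the diagonal right action of $\gamma$ on $\Lambda[[\varGamma_{kp^\infty}]]\tensor_\Lambda\cmplx{P}$ collapses to the naive one. The \emph{same} change-of-ring homomorphism $\ringtransf_{\cmplx{P[[\varGamma_{kp^\infty}]]}}$ therefore realises both $f_!f^*A$ (from $f_!f^*g_!g^*\Int_p$) and $\ringtransf_{\Lambda[[\varGamma_{kp^\infty}]]}(A)$ (from $\ringtransf_{\Int_p[G][[\varGamma_{kp^\infty}]]}(g_!g^*\Int_p)$), and the argument closes immediately once these two inputs are seen to agree in $\Det$. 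Your proposed workaround happens to amount to the same thing, but recognising the coincidence of bimodule structures makes the last paragraph of your sketch unnecessary.
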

\begin{proof}
We may assume that $\Lambda$ is finite. Let $s\colon X\mto \FF$ denote the structure map. From \cite[Prop.~7.2]{Witte:MCVarFF} it follows that
$$
L(\RDer s_!f_!f^*A,T)=L(\RDer s_!\ringtransf_{\Lambda[[\varGamma_{kp^\infty}]]}(A),\gamma^{-1}T).
$$
By applying this to $x^*A$ for each closed point $x\colon \Spec k(x)\mto X$ of $X$ and using
$$
f_!f^*x^*A=x^*f_!f^*A
$$
\cite[Prop.~6.4.(1)]{Witte:MCVarFF}
we see that
$$
L(f_!f^*A,T)=L(\ringtransf_{\Lambda[[\varGamma_{kp^\infty}]]}(A),\gamma^{-1}T).
$$
Hence, the images of $Q(f_!f^*A,T)$ and $Q(\ringtransf_{\Lambda[[\varGamma_{kp^\infty}]]}(A),\gamma^{-1}T)$ agree in the group $\KTh_1(\Lambda[[\varGamma_{kp^\infty}]][[T]])$.

If $a\colon X'\mto X$ is a morphism in $\cat{Sch}^\sep_{\FF}$ and $(f\colon X'\mto X,\varGamma_{kp^\infty})$ is the cyclotomic $\varGamma_{kp^\infty}$-covering of $X'$, then
$$
\RDer a_!f_!f^*A=f_!f^*\RDer a_!A
$$
by \cite[Prop.~6.4.(2)]{Witte:MCVarFF}. In particular, this applies to open and closed immersions. By induction on the dimension of $X$ we can thus reduce to the case $X$ integral and $A=\ringtransf_{\cmplx{P}}(g_!g^*\Int_p)$ for some finite connected Galois covering $(g\colon Y\mto X, G)$ and some complex $\cmplx{P}$ in $\Int_p[G]^\op$-$\cat{SP}(\Lambda)$.
With the complex
$$
\cmplx{P[[\varGamma_{kp^\infty}]]}=\Lambda[[\varGamma_{kp^\infty}]]\tensor_{\Lambda}\cmplx{P}
$$
in $\Int_p[[G\times\varGamma_{kp^\infty}]]\text{-}\cat{SP}(\Lambda[[\varGamma_{kp^\infty}]])$ we have
\begin{align*}
\ringtransf_{\cmplx{P[[\varGamma_{kp^\infty}]]}}(f_!f^*g_!g^*\Int_p)&=f_!f^*A,\\ \ringtransf_{\cmplx{P[[\varGamma_{kp^\infty}]]}}(\ringtransf_{\Lambda[[\varGamma_{kp^\infty}]]}(g_!g^*\Int_p))&=\ringtransf_{\Lambda[[\varGamma_{kp^\infty}]]}(A).
\end{align*}
Applying Prop.~\ref{prop:vanishing of SK} to
$$
R=\talg{\Int_p[[\varGamma_{kp^\infty}]]}\isomorph\talg{\Int_p[\Int/k\Int][[T']]}
$$
we may assume that $\ringtransf_{\cmplx{\talg{P[[\varGamma_{kp^\infty}]]}}}$ factors through $\Det(\talg{\Int_p[[G\times\varGamma_{kp^\infty}]]})$. We conclude
\begin{align*}
Q(f_!f^*A,T)&=\ringtransf_{\cmplx{\talg{P[[\varGamma_{kp^\infty}]]}}}(Q(f_!f^*g_!g^*\Int_p,T))\\
            &=\ringtransf_{\cmplx{\talg{P[[\varGamma_{kp^\infty}]]}}}(Q(g_!g^*\Int_p,\gamma^{-1}T))\\
            &=Q(A,\gamma^{-1}T).
\end{align*}
\end{proof}

The following theorem shows that $\widetilde{\ncL}_{G}(X/\FF,A)$ satisfies the right interpolation property.

\begin{thm}\label{thm:link to classical L-function}
Let $X$ be a scheme in $\cat{Sch}^\sep_\FF$ and let $(f\colon Y\mto X,G)$ be an admissible principal covering containing the cyclotomic $\varGamma_{kp^{\infty}}$-covering.  Furthermore, let $\Lambda$ and $\Omega$ be adic $\Int_p$-algebras with $\Omega$ commutative. For every $A\in\KTh_0(X,\Lambda)$ and every $\cmplx{M}$ in $\Lambda[[G]]^\op$-$\cat{SP}(\Omega)$, we have
$$
L(\ringtransf_{\cmplx{\widetilde{M}}}(A),T)\in\KTh_1(\talg{\Omega}_{\widetilde{S}})
$$
and
$$
\ringtransf_{\Omega[[\varGamma_{kp^{\infty}}]]}\ringtransf_{\cmplx{M[[G]]^{\delta}}}\big(\widetilde{\ncL}_{G}(X/\FF,A)\big)= L(\ringtransf_{\cmplx{\widetilde{M}}}(A),\gamma^{-1})
$$
in $\KTh_1(\Omega[[\varGamma_{kp^{\infty}}]]_S)$.
\end{thm}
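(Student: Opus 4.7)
The plan is to split $\widetilde{\ncL}_G(X/\FF, A) = \ncL_G(X/\FF, A)\cdot Q(f_!f^*A, 1)$ and to exhibit the two factors as, respectively, the $L(\RDer s_!\,\cdot\,,\gamma^{-1})$-piece and the $L(\,\cdot\,,\gamma^{-1})/L(\RDer s_!\,\cdot\,,\gamma^{-1})$-piece of the target value, which multiply to $L(\ringtransf_{\cmplx{\widetilde{M}}}(A),\gamma^{-1})$. The key inputs are Theorem~\ref{thm:construction of Q} (compatibility of $Q$ with ring change), Theorem~\ref{thm:transformation properties} (transport through the cyclotomic quotient), Proposition~\ref{prop:cyclotomic case} (cyclotomic specialisation of $Q$), and the formula for $\ringtransf(\ncL_G)$ from \cite[Thm.~8.6]{Witte:MCVarFF} recalled before the definition of $\widetilde{\ncL}_G$.

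I would first dispose of the integrality assertion $L(\ringtransf_{\cmplx{\widetilde{M}}}(A),T)\in \KTh_1(\talg{\Omega}_{\widetilde{S}})$ using property~(1) of Theorem~\ref{thm:construction of Q} applied to $\ringtransf_{\cmplx{\widetilde{M}}}(A)\in\KTh_0(X,\Omega)$, which yields
\[
L(\ringtransf_{\cmplx{\widetilde{M}}}(A),T) \;=\; Q(\ringtransf_{\cmplx{\widetilde{M}}}(A),T)\cdot L(\RDer s_!\ringtransf_{\cmplx{\widetilde{M}}}(A),T)
\]
in $\KTh_1(\Omega[[T]])$. Commutativity of $\Omega$ and the proposition of Section~1 put the first factor in $\widehat{\KTh}_1(\talg{\Omega}) = \talg{\Omega}^\times$; the second factor is the $L$-function of a perfect complex on $\Spec\FF$, hence a finite alternating product of determinants $\det(\id - \Frob T^d)^{\pm 1}$ also in $\talg{\Omega}^\times$. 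Both are units in $\talg{\Omega}_{\widetilde{S}}$.

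For the main identity, apply $\ringtransf := \ringtransf_{\Omega[[\varGamma_{kp^{\infty}}]]}\ringtransf_{\cmplx{M[[G]]^{\delta}}}$ to each factor of $\widetilde{\ncL}_G$. The $\ncL_G$-factor yields $L(\RDer s_!\ringtransf_{\cmplx{\widetilde{M}}}(A),\gamma^{-1})$ by the cited formula, where I am using that in that source's notation $\ringtransf_{\cmplx{\widetilde{M}}}(f_!f^*A)$ unfolds to $f_!f^*A\tensor_{\Lambda[[G]]}\cmplx{M} = A\tensor_\Lambda \cmplx{\widetilde{M}} = \ringtransf_{\cmplx{\widetilde{M}}}(A)$. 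For the $Q$-factor I would compute at the level of $Q(\,\cdot\,,T)\in\widehat{\KTh}_1(\talg{\,\cdot\,})$ and evaluate at $T=1$ only at the end. Two successive applications of property~(2) of Theorem~\ref{thm:construction of Q}, combined with the identities $\ringtransf_{\cmplx{M[[G]]^{\delta}}}f_!f^*A = f_!f^*\ringtransf_{\cmplx{\widetilde{M}}}(A)$ and $\ringtransf_{\Omega[[\varGamma_{kp^\infty}]]}f_!f^*B = f_{c,!}f_c^*B$ (the latter being Theorem~\ref{thm:transformation properties}(2) for $H'=\ker(G\twoheadrightarrow\varGamma_{kp^\infty})$, with $f_c\colon X_{kp^\infty}\mto X$ the cyclotomic covering), transport $Q(f_!f^*A,T)$ to $Q(f_{c,!}f_c^*\ringtransf_{\cmplx{\widetilde{M}}}(A),T)$. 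Proposition~\ref{prop:cyclotomic case} rewrites this as $Q(\ringtransf_{\Omega[[\varGamma_{kp^\infty}]]}\ringtransf_{\cmplx{\widetilde{M}}}(A),\gamma^{-1}T)$. By the very construction of the evaluation $T\mapsto\gamma^{-1}$ recalled before the definition of $\widetilde{\ncL}_G$, its value at $T=1$ is precisely $Q(\ringtransf_{\cmplx{\widetilde{M}}}(A),\gamma^{-1})\in\KTh_1(\Omega[[\varGamma_{kp^\infty}]])$. Passing to $\KTh_1(\Omega[[\varGamma_{kp^\infty}]]_S)$ and invoking property~(1) of Theorem~\ref{thm:construction of Q} together with the first part of the theorem to make the ratio meaningful, the image is $L(\ringtransf_{\cmplx{\widetilde{M}}}(A),\gamma^{-1})/L(\RDer s_!\ringtransf_{\cmplx{\widetilde{M}}}(A),\gamma^{-1})$. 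Multiplying with the $\ncL_G$-contribution cancels the common $L(\RDer s_!\,\cdot\,,\gamma^{-1})$-factor and produces the asserted equality.

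The hardest part will be bookkeeping rather than depth: keeping straight which category every instance of $\ringtransf$ lives on, and checking that the evaluation $T\mapsto\gamma^{-1}$ really does commute with all intervening change-of-ring homomorphisms in the way implicitly used by Proposition~\ref{prop:cyclotomic case}. Both points follow once one unravels the description of $T\mapsto\gamma^{-1}$ as the composition (ring change $\Lambda\to\Lambda[[\varGamma_{kp^\infty}]]$)$\circ$(automorphism $T\mapsto\gamma^{-1}T$)$\circ$(evaluation at $T=1$) recorded in Section~4. Once these compatibilities are in place, everything reduces to a formal chase through the already-proved transformation properties.
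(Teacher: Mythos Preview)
Your approach is essentially the same as the paper's: split $\widetilde{\ncL}_G = \ncL_G \cdot Q(f_!f^*A,1)$, invoke \cite[Thm.~8.6]{Witte:MCVarFF} for the $\ncL_G$-factor, and use Proposition~\ref{prop:cyclotomic case} together with the ring-change compatibility of $Q$ for the correction term. The paper's proof is simply a two-sentence version of what you wrote out in detail.

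One small overstatement to fix: in the integrality argument you claim that the Euler factors $\det(\id-\Frob_{\FF}T)^{\pm1}$ of $L(\RDer s_!\ringtransf_{\cmplx{\widetilde{M}}}(A),T)$ lie in $\talg{\Omega}^\times$. That is not true in general, since the inverse of a polynomial with constant term $1$ need not have coefficients tending to $0$. What is true (and what you actually need) is that each such polynomial lies in $\widetilde{S}$, so the alternating product lies in $\KTh_1(\talg{\Omega}_{\widetilde{S}})$; together with $Q(\ringtransf_{\cmplx{\widetilde{M}}}(A),T)\in\talg{\Omega}^\times$ this yields the first assertion. With that correction, your argument goes through as written.
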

\begin{proof}
As remarked above, the corresponding statement for $L(\RDer s_!\ringtransf_{\cmplx{\widetilde{M}}}(A),T)$ and $\ncL_{G}(X/\FF,A)$ follows from \cite[Thm.~8.6]{Witte:MCVarFF}. Since
$$
L(\ringtransf_{\cmplx{\widetilde{M}}}(A),T)=Q(\ringtransf_{\cmplx{\widetilde{M}}}(A),T)L(\RDer s_!\ringtransf_{\cmplx{\widetilde{M}}}(A),T),
$$
an application of Prop.~\ref{prop:cyclotomic case} concludes the proof of the theorem.
\end{proof}

As in the case where $p$ is not equal to the characteristic of $\FF$, the element $L(A,\gamma^{-1})$ interpolates the values $L(A(\epsilon^r),1)$, where $A(\epsilon^r)$ denotes the twist of $A$ by the $r$-th power of the cyclotomic character $\epsilon$. However, different from this situation, we obtain no information on the values $L(A,q^n)$. In particular, we still lack a noncommutative analogue of the description of these values as obtained by Milne \cite{Mil:VZFVFF} in the commutative case.

\bibliographystyle{amsalpha}
\bibliography{Literature}
\end{document}